\let\bbordermatrix\bordermatrix 
\patchcmd{\bbordermatrix}{8.75}{4.75}{}{}
\patchcmd{\bbordermatrix}{\left(}{\left[}{}{}
\patchcmd{\bbordermatrix}{\right)}{\right]}{}{}
\theoremstyle{plain}
\newtheorem{thm}{Theorem}
\newtheorem{prop}{Proposition}
\newtheorem{lem}{Lemma}
\newtheorem{cor}{Corollary}
\def\Z{\mathbb{Z}}
\def\e{\varepsilon}
\def\a{\alpha}
\def\b{\beta}
\def\s{\sigma}
\theoremstyle{definition}
\newtheorem{example}{Example}
\theoremstyle{remark}
\newtheorem*{rem}{Remark}
\begin{document}

\title{Alexander and writhe polynomials for virtual knots}

\author{Blake Mellor}
\address{Department of Mathematics, Loyola Marymount University}
\email{blake.mellor@lmu.edu}

\begin{abstract}
We give a new interpretation of the Alexander polynomial $\Delta_0$ for virtual knots due to Sawollek \cite{sa} and Silver and Williams \cite{sw}, and use it to show that, for any virtual knot, $\Delta_0$ determines the writhe polynomial of Cheng and Gao \cite{cg} (equivalently, Kauffman's affine index polynomial \cite{ka3}).  We also use it to define a second-order writhe polynomial, and give some applications.
\end{abstract}

\date{\today}

\maketitle

\section{Introduction}

Virtual knots were introduced by Kauffman \cite{ka} as a generalization of classical knot theory, and since then many invariants have been developed to help distinguish virtual knots, and to determine when a virtual knot is equivalent to a classical knot.  In the past few years, several authors have developed invariants that generalize the classical {\em writhe} of a knot \cite{ch, cg, fk, he, ka2, ka3, st}.  These invariants have been used to define Vassiliev invariants of virtual knots \cite{he, ka3}, give bounds on the unknotting number (when it exists) and forbidden number of virtual knots \cite{st, cgm}, and distinguish mutant virtual knots \cite{fk}, among other applications.  These invariants can be unified in a single polynomial invariant, variously called the writhe polynomial \cite{cg}, the affine index polynomial \cite{ka3} and the wriggle polynomial \cite{fk}.  We will refer to it as the {\em writhe polynomial}.

Other authors have extended the classical Alexander polynomials to virtual knots \cite{bo, sa, sw}.  As for knots, there is a sequence of Alexander polynomials $\Delta_k$ for virtual knots, with each polynomial defined modulo the lower-order polynomials.  For classical knots the lowest-order invariant $\Delta_0$ is always trivial, so the most interesting polynomial in the sequence is $\Delta_1$; for virtual knots, however, $\Delta_0$ is generally not trivial, and it has many applications.  In this paper we will provide a new way to look at the polynomial $\Delta_0$, and use this new interpretation to show that the writhe polynomial can be obtained from the Alexander polynomial.

In Section \ref{S:virtual} we will review the definitions of virtual knots and Gauss diagrams, and define the index of a crossing.  In Section \ref{S:alexander} we define the Alexander polynomial $\Delta_0(K)(u,v)$ and give a new interpretation using indices.  In Section \ref{S:writhe} we define the writhe polynomial $W_K(t)$, and show how it is determined by the Alexander polynomial.  In Section \ref{S:2writhe} we extend these ideas to define a ``second-order" writhe polynomial $V_K(t)$.  Finally, in Section \ref{S:applications} we present some applications of our results.  We end with an appendix listing the values of $W_K(t)$ and $V_K(t)$ for all virtual knots with at most 4 crossings.

\section{Virtual knots} \label{S:virtual}

Our approach to virtual knots will be combinatorial. Kauffman \cite{ka} showed that virtual knots can be defined as equivalence classes of diagrams modulo certain moves, generalizing the Reidemeister moves of classical knot theory. Diagrams for virtual knots contain both classical crossings (positive and/or negative crossings, if the knot is oriented) and \emph{virtual} crossings, as shown in Figure \ref{F:crossings}.  Two diagrams are equivalent if they are related by a sequence of the Reidemeister moves shown in Figure \ref{F:reidemeister}. Note that moves (I)--(III) are the classical Reidemeister moves. Kauffman \cite{ka} showed that classical knots are equivalent by this expanded set of Reidemeister moves if and only if they are equivalent by the classical Reidemeister moves, so classical knot theory embeds inside virtual knot theory.

\begin{figure}[htbp]
\begin{center}
\scalebox{.6}{\includegraphics{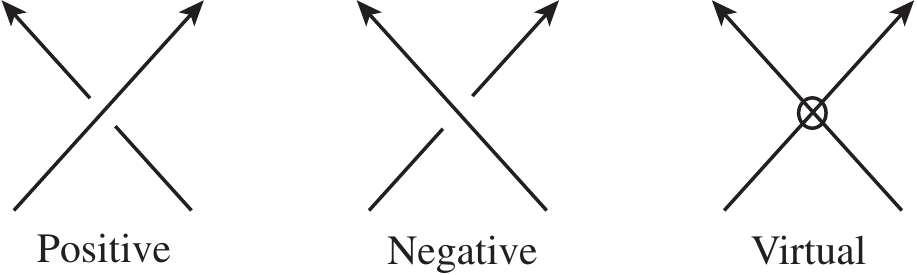}}
\end{center}
\caption{Classical and virtual crossings}
\label{F:crossings}
\end{figure}

\begin{figure}[htbp]
\begin{center}
\scalebox{.8}{\includegraphics{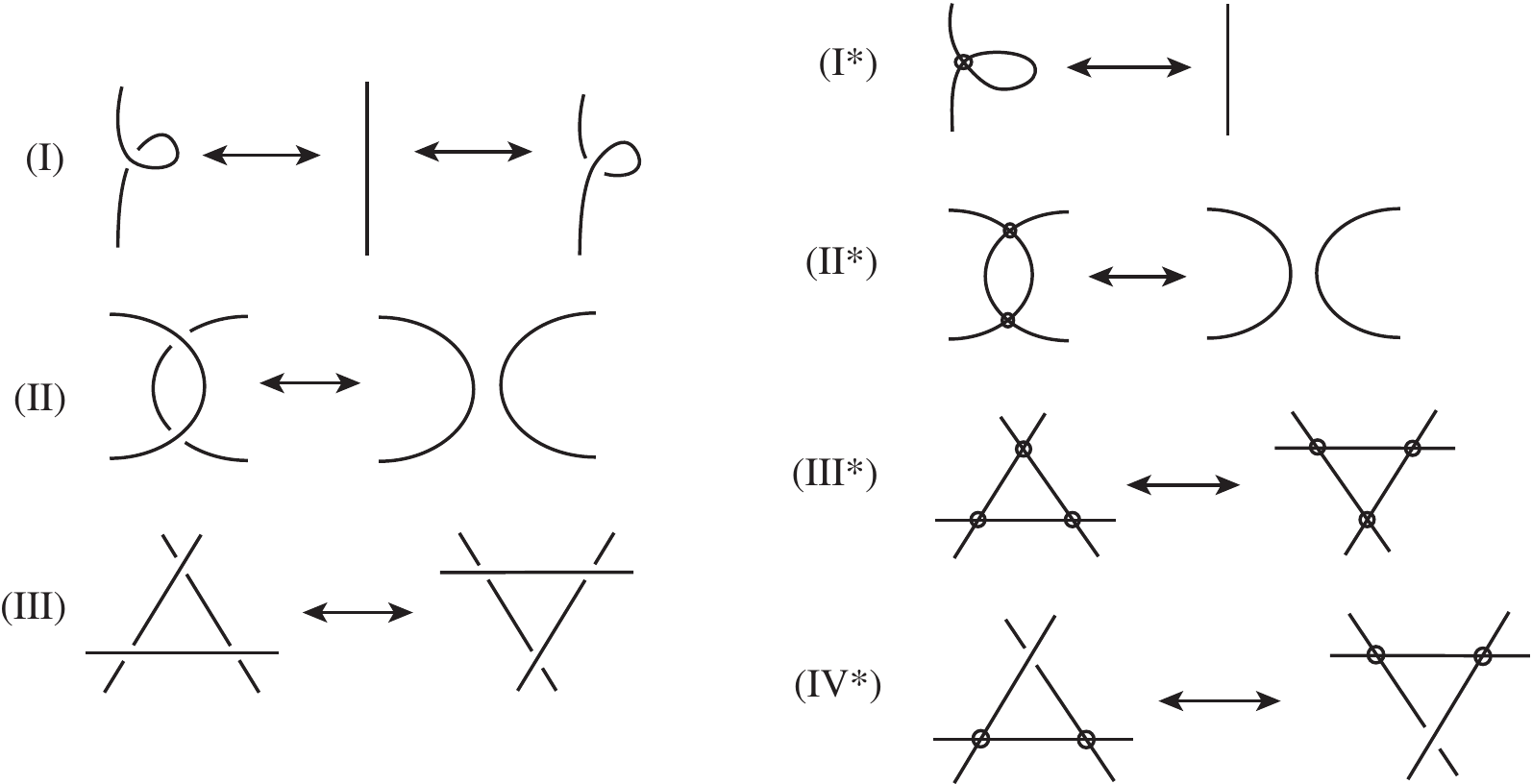}}
\end{center}
\caption{Reidemeister moves for virtual knots}
\label{F:reidemeister}
\end{figure}

\begin{figure}[htbp]
\begin{center}
\scalebox{.6}{\includegraphics{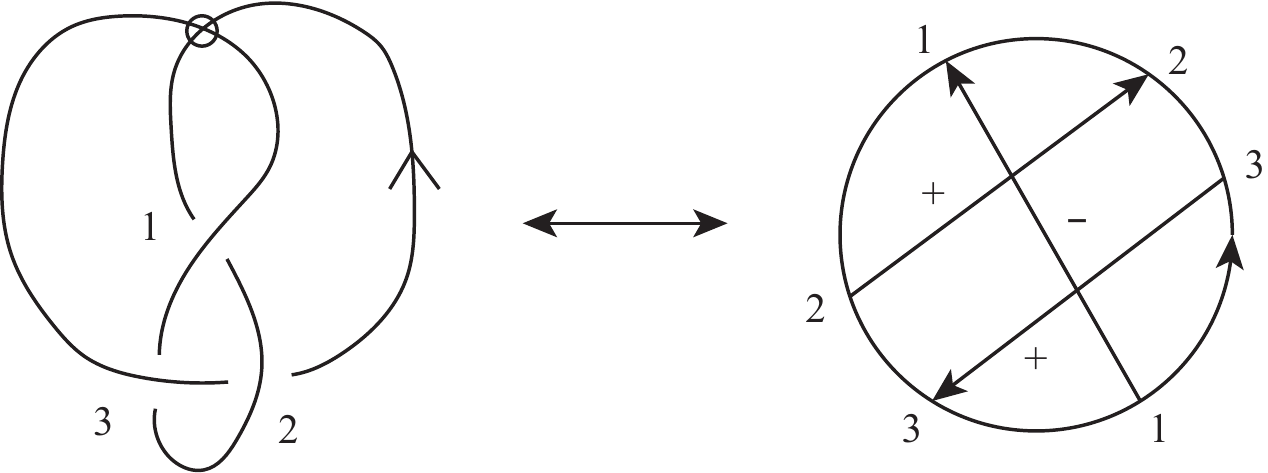}}
\end{center}
\caption{Knot diagram and Gauss diagram for knot with Gauss code U1-O2+U3+O1-O3+U2+.}
\label{F:gaussex}
\end{figure}

One motivation for virtual knots comes from {\em Gauss codes} and {\em Gauss diagrams}.  Any oriented classical knot can be represented by its Gauss code.  We first choose a labeling of the crossings, from 1 to $m$.  The Gauss code is found by selecting a point on the knot, and then making a complete circuit along the knot until we return to this point.  Whenever we pass a crossing, we write down a triple indicating whether it is an over or under-crossing (O or U), the number of the crossing, and the sign of the crossing (so each crossing appears twice in the code, once as an over-crossing and once as an under-crossing).  We can represent this code visually using a Gauss diagram.  The Gauss diagram is an oriented circle with $2m$ points marked along the boundary, corresponding to the $2m$ triples in the Gauss code.  We connect the pair of points corresponding to each crossing with a chord directed from the over-crossing to the under-crossing, and label the chord with the sign of the crossing.  For example, Figure \ref{F:gaussex} shows the knot diagram and Gauss diagram for the knot with Gauss code U1-O2+U3+O1-O3+U2+.

However, it is easy to write down plausible Gauss codes (i.e. each crossing appears in two triples, with the same sign, once with O and once with U) which do not come from classical knots; these codes, and the associated Gauss Diagrams, correspond to virtual knots (where the virtual crossings are ignored in the Gauss code).  Kauffman \cite{ka} showed that the correspondence between Gauss diagrams (modulo equivalents of the classical Reidemeister moves) and virtual knots (modulo the classical and virtual Reidemeister moves) is a bijection.

We will assign several {\it indices} to the chords of a Gauss diagram.  First, we label the endpoint of each chord with a sign.  Let $c = \overrightarrow{PQ}$ be a chord in Gauss diagram $G$, oriented from $P$ to $Q$, with sign $\e(c)$.  We label point $P$ (the over-crossing) with sign $-\e(c)$ and point $Q$ (the under-crossing) with $\e(c)$.  Now let $\a$ be the arc of the bounding circle from $P$ to $Q$, and $\b$ be the arc of the bounding circle from $Q$ to $P$ (both following the orientation of the circle).  So $\a$ is the part of the bounding circle to the right of $c$, and $\b$ is the part of the bounding circle to the left of $c$. \begin{itemize}
	\item The {\em right over-index} of $c$, denoted $RO(c)$, is the sum of the signs of the over-crossing points on arc $\a$.
	\item The {\em right under-index} of $c$, denoted $RU(c)$, is the sum of the signs of the under-crossing points on arc $\a$.
	\item The {\em left over-index} of $c$, denoted $LO(c)$, is the sum of the signs of the over-crossing points on arc $\b$.
	\item The {\em left under-index} of $c$, denoted $LU(c)$, is the sum of the signs of the under-crossing points on arc $\b$.
	\item The {\em index} of $c$ is $Ind(c) = RO(c) + RU(c)$, i.e. the sum of all signs on arc $\b$.
\end{itemize}
Note that $Ind(c) = -LO(c) - LU(c)$, since the sum of all signs around the Gauss diagram is 0.  Figure \ref{F:index} shows a labeled Gauss diagram and its indices.

\begin{figure}[htbp]
\begin{minipage}[b]{.40\textwidth}
\begin{center}
\scalebox{.8}{\includegraphics{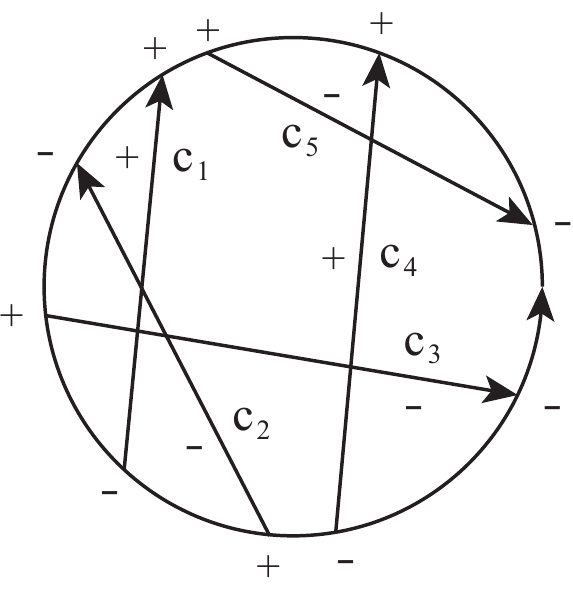}}
\end{center}
\end{minipage} \qquad
\begin{minipage}[b]{.40\textwidth}
\begin{center}
\begin{tabular}[b]{|c|c|c|c|c|c|} \hline
{\em Chord} & $RO$ & $RU$ & $LO$ & $LU$ & $Ind$ \\ \hline
$c_1$ & 1 & -1 & 1 & -1 & 0 \\ \hline
$c_2$ & 0 & 0 & 0 & 0 & 0 \\ \hline
$c_3$ & -1 & 0 & 1 & 0 & -1 \\ \hline
$c_4$ & 0 & -2 & 2 & 0 & -2 \\ \hline
$c_5$ & 0 & -1 & 0 & 1 & -1 \\ \hline
\end{tabular}
\vspace{.5in}
\end{center}
\end{minipage}
\caption{Indices of a Gauss diagram.}
\label{F:index}
\end{figure}

\section{Alexander polynomial} \label{S:alexander}

The Alexander polynomial was extended to virtual knots by Sawollek \cite{sa} and then, using a different approach, by Silver and Williams \cite{sw}.  Silver and Williams proved that Sawollek's polynomial was equivalent to their first Alexander polynomial, $\Delta_0$.  We will use Silver and Williams' polynomial, but we will use a definition presented in \cite[Prop. 4.1]{sw} that incorporates Sawollek's approach.  Given a virtual knot diagram $D$ with $n$ classical crossings, labeled from $c_1$ to $c_n$, an \emph{arc} of the diagram extends from one classical crossing to the next classical crossing (ignoring any virtual crossings).  Note that these go from crossing to crossing, \emph{not} undercrossing to undercrossing (which is the usual notion of an arc in a classical knot diagram).  So $D$ has $2n$ arcs, which we label from $a_1$ to $a_{2n}$.  We choose the labels so that the arcs coming into crossing $c_i$ are labeled $a_{2i-1}$ and $a_{2i}$, as shown in Figure \ref{F:alexander}.  For each $c_i$, we define a $2 \times 2$ matrix $M_i$ to be either $M_+$ or $M_-$, as shown in Figure \ref{F:alexander}, depending on the sign of the crossing.

\begin{figure}[htbp]
\begin{center}
\scalebox{.8}{\includegraphics{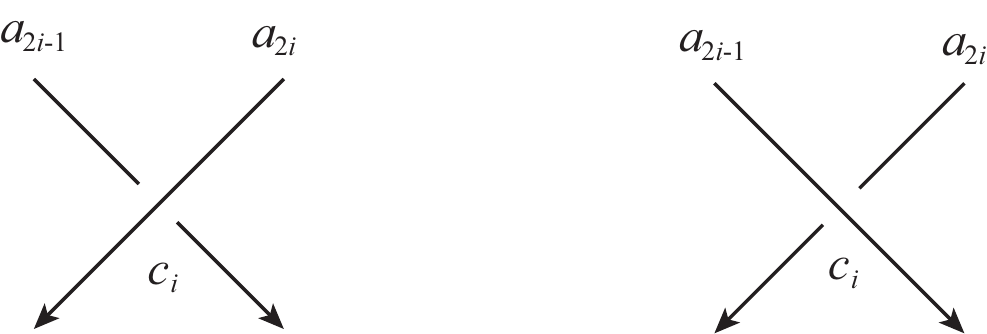}}
\medskip
$$M_+ = \left( \begin{matrix} u^{-1} & 1-(uv)^{-1} \\ 0 & v^{-1} \end{matrix} \right) \qquad\qquad M_- = \left( \begin{matrix} v & 0 \\ 1-uv & u \end{matrix} \right)$$
\end{center}
\caption{A positive (left) and negative (right) crossing}
\label{F:alexander}
\end{figure}

We then let $M$ be the $2n \times 2n$ block diagonal matrix with blocks $M_1, \dots, M_n$.  The rows and columns of $M$ correspond to arcs $a_1, \dots, a_{2n}$, in order.  We also let $P$ be the matrix for the permutation $\pi$ of the arcs of $D$ that is the cycle we read as we go around the knot.  So the entry of $P$ in the $i$th row and $j$th column is 1 if $\pi(a_i) = a_j$ and 0 otherwise.  Then we define
$$\Delta_0(D)(u,v) = \det(M - P).$$

\begin{example}
As an example, we again consider the virtual knot from Figure \ref{F:gaussex}.  In Figure \ref{F:alexander2} we label the arcs of the graph.  The permutation $\pi$ of the arcs induced by the orientation is the cycle $\pi = (a_1a_6a_3a_2a_4a_5)$.

\begin{figure}[htbp]
\begin{center}
\scalebox{.8}{\includegraphics{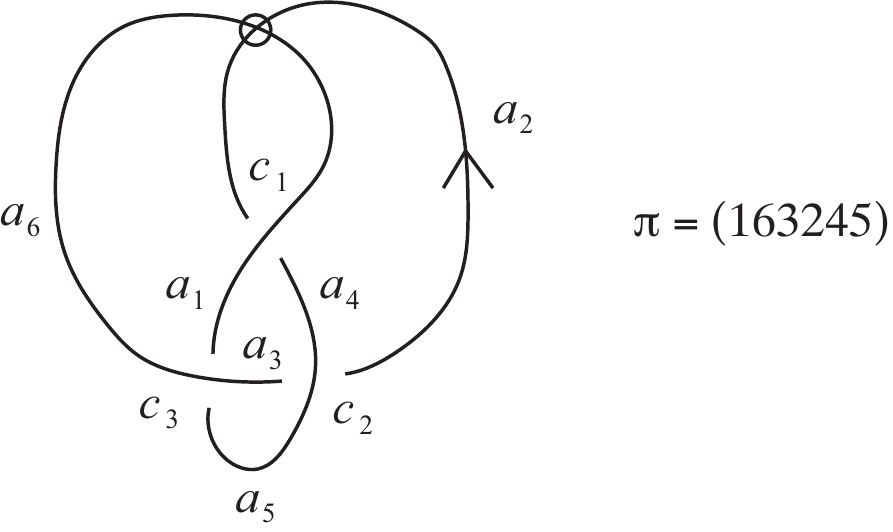}}
\end{center}
\caption{Virtual knot with arc labels, and the permutation $\pi$ induced by the orientation of the knot.}
\label{F:alexander2}
\end{figure}

Now we can write down the matrices $M$ and $P$, and compute the Alexander polynomial.

$$M = \left({\begin{matrix} M_- & 0 & 0 \\ 0 & M_+ & 0 \\ 0 & 0 & M_+ \end{matrix}}\right) = \left({\begin{matrix} v & 0 & 0 & 0 & 0 & 0 \\ 1-uv & u & 0 & 0 & 0 & 0 \\ 0 & 0 & u^{-1} & 1-(uv)^{-1} & 0 & 0 \\ 0 & 0 & 0 & v^{-1} & 0 & 0 \\ 0 & 0 & 0 & 0 & u^{-1} & 1-(uv)^{-1} \\ 0 & 0 & 0 & 0 & 0 & v^{-1} \end{matrix}}\right)$$
$$P = \left({\begin{matrix} 0&0&0&0&0&1 \\ 0&0&0&1&0&0 \\ 0&1&0&0&0&0 \\ 0&0&0&0&1&0 \\ 1&0&0&0&0&0 \\ 0&0&1&0&0&0 \end{matrix}}\right)$$

$$\Delta_0(D)(u,v) = \det(M-P) = (1-u)(1-v)(1-uv)u^{-1}v^{-1}$$
\end{example}

To turn $\Delta_0$ from an invariant of diagrams into an invariant of virtual knots, we look at how it changes under the virtual Reidemeister moves.  The virtual moves have no effect on the invariant, so we only need to consider the classical moves.  When we add orientations to the moves shown in Figure \ref{F:reidemeister}, there are quite a few cases, but Polyak \cite{po} showed these are all generated by the four oriented Reidemeister moves $I_a, I_b, II_a, III_a$ shown in Figure \ref{F:genreid}.

\begin{figure}[htbp]
\begin{center}
\scalebox{.8}{\includegraphics{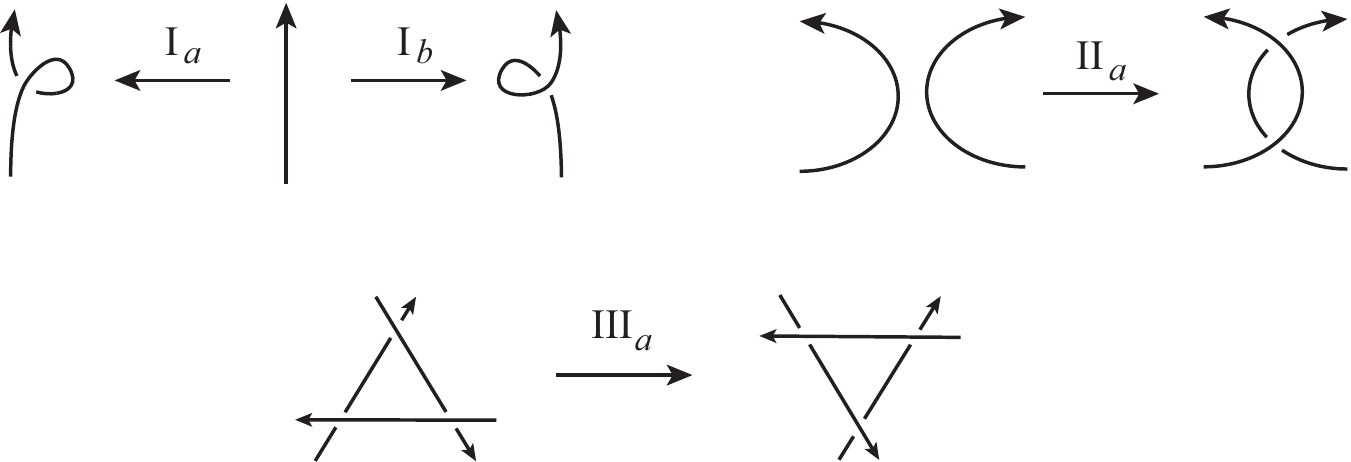}}
\end{center}
\caption{Generating set for oriented Reidemeister moves}
\label{F:genreid}
\end{figure}

The following proposition is a straightforward exercise in analyzing $\det(M-P)$ before and after each of the Reidemeister moves.

\begin{prop}\label{P:alexreid}
Let $D$ be a virtual knot diagram, and $D'$ the result of applying one of the moves $I_a, I_b, II_a, III_a$.  Then \begin{itemize}
	\item[$I_a$:] $\Delta_0(D')(u,v) = \Delta_0(D)(u,v)$
	\item[$I_b$:] $\Delta_0(D')(u,v) = (uv)^{-1}\Delta_0(D)(u,v)$
	\item[$II_a$:] $\Delta_0(D')(u,v) = \Delta_0(D)(u,v)$
	\item[$III_a$:] $\Delta_0(D')(u,v) = \Delta_0(D)(u,v)$
\end{itemize}
\end{prop}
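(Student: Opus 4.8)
The plan is to verify each move by writing down $M-P$ both before and after the move and reducing the larger determinant to the smaller one, reading off the multiplicative factor. The two facts that drive everything are $\det M_+ = (uv)^{-1}$ and $\det M_- = uv$, together with the observation that a Reidemeister move alters $M$ and $P$ only locally: it inserts, deletes, or rearranges a bounded number of blocks of $M$, and it changes $\pi$ only on the arcs meeting the move. So I would set up, for each move, a block decomposition of $M'-P'$ separating the ``new'' rows and columns from the ``old'' ones, and apply a Schur-complement computation.

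For the type~I moves, suppose the curl is spliced into the diagram along a transition $\pi(a_p)=a_q$. After the move there is one new crossing $c_{n+1}$, contributing a block $M_{n+1}$ in rows and columns $2n+1,2n+2$, and $\pi$ is replaced by $\pi'$ routing $a_p \to a_{2n+1}\to a_{2n+2}\to a_q$, the middle step being the loop. Writing $M'-P'$ in block form with the new $2\times 2$ corner $D$, I expect to find that the only couplings between old and new arcs are a single $-1$ in position $(a_p,2n+1)$ and a single $-1$ in position $(2n+2,a_q)$. The Schur complement of $D$ is then a rank-one modification of the old matrix $M-P$ supported in position $(a_p,a_q)$, and the computation I would do is to check that this rank-one term cancels exactly: the relevant scalar $1+(\beta-1)/\det D$ (with $\beta$ the appropriate off-diagonal entry of the crossing block) vanishes. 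This leaves $\det(M'-P') = \det D\cdot\det(M-P)$, and evaluating $\det D$ gives the factor $1$ in one configuration and $(uv)^{-1}$ in the other, exactly as asserted; the two cases are distinguished by which strand carries the loop (equivalently, the sign of the new crossing).

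For $II_a$ the move inserts two crossings of opposite sign joined by a bigon, so four new arcs and a block $\mathrm{diag}(M_+,M_-)$ appear. I would run the same Schur-complement reduction, now eliminating the $4\times 4$ corner coming from the bigon; the two couplings to the old diagram should again combine into a single rank-one term that cancels, while the determinant of the corner works out to $\det M_+\,\det M_- = (uv)^{-1}\cdot uv = 1$. Hence $\det(M'-P')=\det(M-P)$.

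The move $III_a$ leaves the number of crossings, and hence the size of the matrix, unchanged, but it permutes the three strands and rewires $\pi$ among the six arcs meeting the move. Here I would isolate the $6\times 6$ minor carrying the three crossing blocks together with the local part of $P$, and check that the two sides of the move yield the same determinant; this amounts to a Yang--Baxter-type identity among $M_+$ and $M_-$. I expect this to be the main obstacle: unlike the type~I and~II cases there is no extra block to eliminate, so the argument is a genuine $6\times 6$ determinant (equivalently $R$-matrix) identity rather than a reduction, and one must also track carefully how $\pi$ is relabeled on each side so that the two local blocks are compared in a consistent basis.
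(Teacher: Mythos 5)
Your proposal is correct and takes essentially the same route as the paper, which offers no written proof at all but simply declares the proposition ``a straightforward exercise in analyzing $\det(M-P)$ before and after each of the Reidemeister moves''; your Schur-complement organization is a valid execution of that exercise, and the details you flag as expectations do check out: the scalar $1+(\beta-1)/\det D$ vanishes in the type I cases (giving factors $\det D = 1$ or $(uv)^{-1}$ for the two generator curls), and for $II_a$ the $4\times 4$ corner is block triangular with determinant $\det M_+ \det M_- = 1$, its inverse's off-diagonal block acting as the identity on the two strands so the rank-one corrections cancel exactly as you predict. The $III_a$ case is, as you correctly anticipate, not a reduction but a finite local determinant identity for the generalized Alexander $R$-matrices, and it does go through.
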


So $\Delta_0$ is well-defined for virtual knots, modulo multiplication by powers of $uv$.  So we can normalize the polynomial by multiplying by $(uv)^k$, where $k$ is the minimum among all powers of $u$ (this matches the normalization used by Silver and Williams \cite{sw}).  We will use $\Delta_0(K)$ to refer to this normalized polynomial, which is now an invariant of virtual knots.

Silver and Williams prove several properties of $\Delta_0(K)$, including the following:

\begin{prop}\label{P:factor}\cite{sw}
Let $K$ be an oriented virtual knot.  Then $(1-u)(1-v)(1-uv)$ divides $\Delta_0(K)(u,v)$.
\end{prop}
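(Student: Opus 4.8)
The plan is to work in the Laurent polynomial ring $\mathbb{Z}[u^{\pm 1}, v^{\pm 1}]$, which is a unique factorization domain, and to note that $1-u$, $1-v$ and $1-uv$ are pairwise non-associate irreducibles in it. Hence it suffices to prove that $\Delta_0(K)(u,v) = \det(M-P)$ vanishes after each of the three substitutions $u=1$, $v=1$ and $uv=1$, taken one at a time with the remaining variable left free; coprimality of the three factors then forces their product to divide $\det(M-P)$. Because $\Delta_0$ is only well-defined up to a unit $(uv)^k$ by Proposition \ref{P:alexreid}, it is enough to establish these vanishing statements for the diagram determinant $\det(M-P)$ itself.

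First I would dispose of $v=1$ and $u=1$, which are immediate. Setting $v=1$ in Figure \ref{F:alexander} makes every row sum of both $M_+$ and $M_-$ equal to $1$, so $M\mathbf{1} = \mathbf{1}$, where $\mathbf{1}$ is the all-ones vector. Since $P$ is a permutation matrix we also have $P\mathbf{1} = \mathbf{1}$, whence $(M-P)\mathbf{1}=0$ and $\det(M-P)=0$. Dually, setting $u=1$ makes every column sum of $M_+$ and $M_-$ equal to $1$, so $\mathbf{1}^{T}M = \mathbf{1}^{T} = \mathbf{1}^{T}P$, and now $\mathbf{1}$ lies in the left kernel of $M-P$, again giving $\det(M-P)=0$.

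The substitution $uv=1$ is where the real work lies. Writing $v=u^{-1}$, both $M_+$ and $M_-$ collapse to the same diagonal block $\operatorname{diag}(u^{-1},u)$, so $M$ becomes a diagonal matrix $D$ carrying $n$ entries equal to $u^{-1}$ (the odd-indexed arcs) and $n$ entries equal to $u$ (the even-indexed arcs); in particular $\det D = \prod_i d_{ii} = u^{-n}u^{n} = 1$. I would then evaluate $\det(D-P)$ using that $P$ realizes the single $2n$-cycle $\pi$ read off by traversing the knot. The key combinatorial observation is that for a diagonal $D$ and the permutation matrix of a fixed-point-free single $2n$-cycle, the only permutations contributing to $\det(D-P)$ are the identity and $\pi$: a contributing $\sigma$ must satisfy $\sigma(i)\in\{i,\pi(i)\}$ for every $i$, and the set $\{i:\sigma(i)=\pi(i)\}$ must be $\pi$-invariant, hence either empty or all of $\{1,\dots,2n\}$. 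This yields $\det(D-P) = \det D + \operatorname{sgn}(\pi)(-1)^{2n} = \det D + \operatorname{sgn}(\pi)$. A $2n$-cycle is an odd permutation, so $\operatorname{sgn}(\pi)=-1$, and therefore $\det(D-P) = 1 - 1 = 0$.

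The main obstacle is exactly this last evaluation: one must confirm that $\pi$ really is a single $2n$-cycle (which holds because the knot is connected, so a single circuit visits all $2n$ arcs), that it is fixed-point free, and that the sign bookkeeping in the cycle expansion is correct. The first two substitutions are routine row- and column-sum checks. With all three vanishing statements established, coprimality of $1-u$, $1-v$ and $1-uv$ in the UFD $\mathbb{Z}[u^{\pm 1}, v^{\pm 1}]$ completes the proof that their product divides $\Delta_0(K)(u,v)$.
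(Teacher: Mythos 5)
Your proof is correct, and it is a genuinely self-contained argument for a statement the paper itself does not prove: Proposition \ref{P:factor} is simply cited from Silver and Williams \cite{sw}. It is worth noting, though, that one third of your argument does appear in the paper, in disguise. Your evaluation at $uv=1$ --- where $M$ collapses to the diagonal matrix $\operatorname{diag}(u^{-1},u,\dots)$ and the only permutations surviving in $\det(M-P)$ are the identity and the single $2n$-cycle $\pi$, contributing $1$ and $\operatorname{sgn}(\pi)(-1)^{2n}=-1$ --- is exactly the computation of $f_0(u,v)=(uv)^{-Wr(D)}-1$ in the proof of Proposition \ref{P:alexdet1}, which the author notes is taken from the proof in \cite{sw}; indeed, at general $(u,v)$ that same identity-versus-$\pi$ argument gives $f_0=(uv)^{-Wr(D)}-1$, whose divisibility by $1-uv$ recovers your specialization. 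What your write-up adds beyond the paper is the clean disposal of the other two factors via the row-sum and column-sum observations ($M\mathbf{1}=\mathbf{1}$ at $v=1$ and $\mathbf{1}^{T}M=\mathbf{1}^{T}$ at $u=1$, matched against $P\mathbf{1}=\mathbf{1}$ and $\mathbf{1}^{T}P=\mathbf{1}^{T}$), together with the explicit UFD bookkeeping: $1-u$, $1-v$, $1-uv$ are pairwise non-associate irreducibles in $\Z[u^{\pm1},v^{\pm1}]$ (units there are $\pm u^{a}v^{b}$, so no two can be associates), and vanishing on each hypersurface yields divisibility after clearing denominators, since the Laurent ring is a localization of $\Z[u,v]$. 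Two small points you handled correctly but implicitly: the special curls of Section \ref{SS:indices} cause no trouble at $uv=1$, since both $1-uv$ and $1-(uv)^{-1}$ vanish there, so $M$ really is diagonal and the off-diagonal entries of $M-P$ are the $-1$'s at positions $(i,\pi(i))$; and your fixed-point-freeness of $\pi$ requires $n\geq 1$, which is harmless since every virtual knot admits a diagram with at least one classical crossing and $\Delta_0$ changes only by units under Reidemeister moves (Proposition \ref{P:alexreid}). In short, your route buys a complete elementary proof from the determinant definition alone, whereas the paper buys brevity by deferring to \cite{sw} and only reproducing the $1-uv$ ingredient where it is needed for Theorem \ref{T:alexwrithe}.
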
 \medskip


\begin{prop}\label{P:reverse} \cite{sw}
Given a diagram $D$ of a virtual knot $K$, let $D^\#$ be the result of switching every (classical) crossing of $D$, $D^*$ be the reflection across a vertical line in the plane of the diagram, and $-D$ the result of reversing all orientations.  Let $K^\#$, $K^*$ and $-K$ be the corresponding virtual links.  Then for all $i \geq 0$,
\begin{enumerate}
	\item $\Delta_i(K^\#)(u,v) = -\Delta_i(K)(v,u)$
	\item $\Delta_i(K^*)(u,v) = \Delta_i(K)(u^{-1}, v^{-1})$
	\item $\Delta_i(-K)(u,v) = -\Delta_i(K)(u^{-1}, v^{-1})$
\end{enumerate}
\end{prop}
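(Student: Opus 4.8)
The plan is to work entirely from the determinant definition $\Delta_0(D)(u,v)=\det(M-P)$ and, for each of the three operations, to read off the induced transformation of the pair $(M,P)$; the formula for $\Delta_0$ then drops out of a short determinant computation, and the higher $\Delta_i$ follow because they are generators of the successive Fitting ideals of the same presentation matrix $M-P$. First I will record the algebraic facts about the local blocks. A one-line computation gives $\det M_+ = (uv)^{-1}$ and $\det M_- = uv$, so $\det M = (uv)^{-w}$ where $w$ is the writhe of $D$; and since $\pi$ is a single $2n$-cycle, $\det P = \operatorname{sgn}(\pi) = (-1)^{2n-1} = -1$. The two structural identities I will use are
$$M_\pm(u^{-1},v^{-1}) = M_\pm(u,v)^{-1}, \qquad S\,M_{-\epsilon}(u,v)\,S = M_\epsilon(u,v)^{-1} = M_\epsilon(u^{-1},v^{-1}),$$
where $S=\left(\begin{smallmatrix}0&1\\1&0\end{smallmatrix}\right)$ swaps the two arcs at a crossing; both are immediate. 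Throughout I will use that transposition preserves the determinant, that $P^{\mathsf T}=P^{-1}$, and the cyclic identity $\det(I-AB)=\det(I-BA)$.

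Next I will identify the action of each move on $(M,P)$. Switching every crossing ($D^\#$) fixes the arc permutation, so $P$ is unchanged while each block $M_\epsilon$ is replaced by $M_{-\epsilon}$; by the identities this is $M^\#(u,v)=\bigl(M(v,u)^{\mathsf T}\bigr)^{-1}$. Reversing the orientation ($-D$) preserves every crossing sign but interchanges the incoming and outgoing arcs at each crossing, which transposes each block, giving $M\mapsto M^{\mathsf T}$ with $P$ unchanged after the induced relabeling. Reflecting in a vertical line ($D^*$) flips every sign and simultaneously swaps the two arcs at each crossing (the left/right reversal), so by the second identity each block becomes $M_\epsilon(u^{-1},v^{-1})$; up to the simultaneous conjugation by $\bigoplus S$ this yields the pair $\bigl(M(u^{-1},v^{-1}),P\bigr)$.

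Each formula is now a mechanical determinant manipulation. For $D^\#$, writing $N=M(v,u)$,
$$\det\bigl((N^{\mathsf T})^{-1}-P\bigr) = \det(N)^{-1}\det\bigl(I-N^{\mathsf T}P\bigr) = \det(N)^{-1}\det(P)^{-1}\det(N-P),$$
and since $\det(N)^{-1}=(uv)^{w}$ is a unit and $\det(P)^{-1}=-1$, normalization gives $\Delta_0(K^\#)(u,v)=-\Delta_0(K)(v,u)$. The same three moves — factor out the block matrix, transpose, and absorb $\det P = -1$ — turn $\det(M^{\mathsf T}-P)$ into $-(uv)^{-w}\Delta_0(K)(u^{-1},v^{-1})$ and $\det\bigl(M(u^{-1},v^{-1})-P\bigr)$ into $\Delta_0(K)(u^{-1},v^{-1})$, proving the formulas for $-K$ and $K^*$ after normalization. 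I emphasize that the minus signs in parts (1) and (3) are \emph{not} absorbed by the normalization (which only clears powers of $uv$); they are precisely the factor $\det(P)=\operatorname{sgn}(\pi)=-1$ coming from $\pi$ being a $2n$-cycle. For the higher polynomials I will note that each move acts on $M-P$ by transposition, by the substitution $u\leftrightarrow v$ or $u,v\mapsto u^{-1},v^{-1}$, and by multiplication by invertible matrices; since the Fitting ideals of a matrix agree with those of its transpose and transform functorially under substitution, every $\Delta_i$ obeys the same rule, with the signs fixed by the chosen normalization.

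The main obstacle is the second step: pinning down the exact action of reflection and of orientation reversal on $(M,P)$, and in particular distinguishing the three regimes — swap of variables $u\leftrightarrow v$ (crossing change), inversion of variables together with transposition (reflection), and transposition alone (reversal) — and checking that the accompanying arc relabelings really return the permutation matrix to $P$ rather than to $P^{-1}$ or a conjugate. A model that is off by a transpose, by a variable swap, or by $P\leftrightarrow P^{-1}$ produces a superficially similar but incorrect formula (for instance it would force $\Delta_0$ to be palindromic, which it is not), so this bookkeeping must be done carefully and cross-checked against the composition relations among the three moves. Once the action on $(M,P)$ is correctly identified, the determinant algebra and the sign count are routine.
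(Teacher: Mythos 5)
Your proposal is correct, but you should know that the paper itself gives no proof of this proposition: it is quoted verbatim from Silver and Williams \cite{sw}, whose own argument runs through the extended Alexander group and its module presentation rather than through the matrix pair $(M,P)$. Your determinant-level derivation is therefore a genuinely different and, within this paper's framework, more self-contained route. The ingredients all check out against the matrices of Figure \ref{F:alexander}: the identities $M_\pm(u^{-1},v^{-1})=M_\pm(u,v)^{-1}$ and $S\,M_{-\e}(u,v)\,S=M_\e(u^{-1},v^{-1})$ are correct (as is the third identity your crossing-change step implicitly uses, $M_{-\e}(u,v)=\bigl(M_\e(v,u)^{\mathsf T}\bigr)^{-1}$), along with $\det M=(uv)^{-Wr(D)}$ and $\det P=-1$, and your manipulations yield $\Delta_0(D^\#)(u,v)=-(uv)^{Wr(D)}\Delta_0(D)(v,u)$, $\Delta_0(D^*)(u,v)=\Delta_0(D)(u^{-1},v^{-1})$, and $\Delta_0(-D)(u,v)=-(uv)^{-Wr(D)}\Delta_0(D)(u^{-1},v^{-1})$, with the unit factors absorbed by the $(uv)^k$ normalization and the signs surviving it, exactly as you argue. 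The passage to all $\Delta_i$ via invariance of Fitting ideals under transposition, variable substitution, and multiplication by invertible matrices is the right mechanism and is in spirit the module argument of \cite{sw} in matrix clothing; for $i\geq 1$ the signs are in any case conventional, since those polynomials are gcds of minors defined only up to units.

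One piece of bookkeeping in your second step deserves correction, though it does not change the outcome. You justify the pair $(M^{\mathsf T},P)$ for $-D$ by saying orientation reversal ``transposes each block''; it does not. Since reversal preserves every crossing sign, the construction applied to $-D$ uses the \emph{same} blocks $M_{\e_i}$, and what changes is the arc permutation: with the natural relabeling (the old arc $a_{\pi(t)}$ receives the new label $t$, which is compatible with the paper's slot convention that $a_{2i-1}$ is the under-incoming arc at a positive crossing and the over-incoming arc at a negative one), the pair for $-D$ is $(M,P^{\mathsf T})$. A global transpose then gives $\det(M-P^{\mathsf T})=\det(M^{\mathsf T}-P)$, which is the expression you actually compute, so your formula for part (3) stands; but the mechanism is $P\mapsto P^{-1}$ plus transposition of the whole matrix, not blockwise transposition. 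That same sign-correlated slot convention is also what makes your claims for $D^\#$ (each $M_\e$ replaced by $M_{-\e}$ with $P$ fixed) and for $D^*$ (conjugation by $\bigoplus S$ returning the permutation to $P$) literally true, so since you yourself flag this bookkeeping as the main obstacle, it should be verified explicitly rather than asserted.
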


\subsection{Indices and the Alexander polynomial} \label{SS:indices}

Let $D$ be a diagram for a virtual knot $K$.  If we denote the entries of the matrix $M-P$ by $b_{ij}$, then 
$$\Delta_0(D)(u,v) = \det(M-P) = \sum_{\s \in S_{2n}}{(-1)^\s b_{1,\s(1)}\cdots b_{2n, \s(2n)}}$$

In this section, we will show how the terms of this sum can be interpreted as counting crossings in links associated to the diagram $D$.  First, we need to define some terminology.  Let $\pi$ be the permutation on $\{1, \dots, 2n\}$ given by the cycle read around the diagram $D$ (so if $a_i$ is an arc into a crossing, $a_{\pi(i)}$ is the corresponding arc leaving the crossing).  Then we have: \begin{itemize}
	\item For each $i$, $b_{i, \pi(i)} = -1$.
	\item If $c_i$ is a positive crossing, then $b_{2i-1, 2i-1} = u^{-1}$, $b_{2i, 2i} = v^{-1}$ and $b_{2i-1, 2i} = 1-(uv)^{-1}$.
	\item If $c_i$ is a negative crossing, then $b_{2i-1, 2i-1} = v$, $b_{2i, 2i} = u$ and $b_{2i, 2i-1} = 1-uv$.
	\item All other $b_{i,j} = 0$.
\end{itemize}

The two exceptions to this are when $c_i$ is a positive crossing and $\pi(2i-1) = 2i$ or when $c_i$ is negative and $\pi(2i) = 2i-1$ (we will refer to these as {\em special curls}).  In the first case we have $b_{2i-1, 2i} = (1-(uv)^{-1}) - 1 = -(uv)^{-1}$ and in the second we have $b_{2i, 2i-1} = (1-uv) - 1 = -uv$.

Let $C$ be a subset of the (classical) crossings of $D$.  A permutation $\s \in S_{2n}$ {\em corresponds} to $C$ if $\s(2i-1) = 2i$ for any positive crossing $c_i$ in $C$, $\s(2i) = 2i-1$ for any negative crossing $c_i$ in $C$, and $\s(t) = t$ or $\pi(t)$ for all other $t \in \{1, \dots, 2n\}$.  Notice that if a permutation does not correspond to some set of crossings, then $b_{t, \s(t)} = 0$ for some $t$, and the corresponding term in $\det(M-P)$ is trivial.  So when computing $\Delta_0(D)(u,v)$, we are only interested in permutations which correspond to some set of crossings.

If $C$ has a special curl $c_i$ and $\s$ is a permutation corresponding to $C$, we will find it convenient to split the term associated to $\s$ in $\det(M-P)$ into two terms.  If $c_i$ is positive, then one term has $b_{2i-1, \s(2i-1)} = b_{2i-1, 2i} = 1-(uv)^{-1}$ and the other term has $b_{2i-1, \s(2i-1)} = b_{2i-1, \pi(2i-1)} = -1$ (and similarly if $c_i$ is negative).  We then consider the first term as coming from the permutation $\s$ corresponding to $C$, and the second term as coming from the permutation $\s$ corresponding to $C' = C - \{c_i\}$.  With this convention, we do not need to consider special curls separately in the proofs in this section.

A set of crossings $C$ is {\em alternating} in $D$ if, as we go around the knot, we alternate between overcrossings and undercrossings in $C$ (ignoring the other crossings).  The corresponding chords in the Gauss diagram for $D$ are called an {\em alternating configuration}; their endpoints will alternate between overcrossings and undercrossings as we go around the boundary circle.  There are exactly two alternating configurations of 2 chords, shown in Figure \ref{F:configuration}.  There are five alternating configurations of 3 chords.  In general, there are fewer than $n!$ alternating configurations of $n$ chords, found by alternating $n$ $O$'s and $n$ $U$'s around a circle and matching them.  The exact count is somewhat complicated, because many configurations may be equivalent by a rotation of the diagram.  Our next lemma shows that {\em only} permutations corresponding to alternating sets of crossings contribute nonzero terms to $\Delta_0(D)$.

\begin{figure}[htbp]
\begin{center}
\scalebox{.8}{\includegraphics{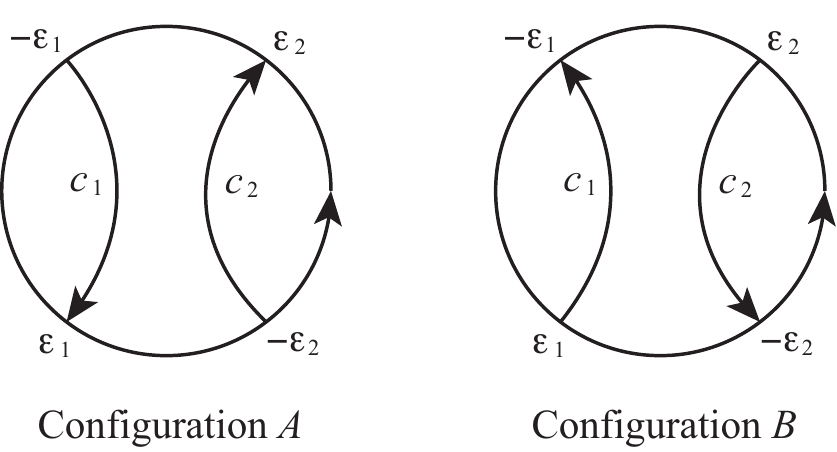}}
\end{center}
\caption{Alternating configurations of pairs of chords (i.e. crossings).}
\label{F:configuration}
\end{figure}

\begin{lem} \label{L:nonalternating}
Let $C$ be a set of crossings in a virtual knot diagram $D$.  If $C$ is not an alternating configuration, then the contribution of a permutation corresponding to $C$ to $\Delta_0(D)(u,v)$ is 0.
\end{lem}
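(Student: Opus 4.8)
The plan is to exploit the reduction already established: a permutation $\sigma$ contributes a nonzero term to $\det(M-P)$ only if it corresponds to some set $C$ of crossings. For a fixed $C$ the smoothing entries are forced and common to every corresponding $\sigma$, so the only freedom is the fixed/followed pattern on the non-$C$ arcs; I expect this pattern to be pinned down by bijectivity once $C \neq \emptyset$, so that $C$ contributes a single term or none. In particular the contribution can vanish only when \emph{no} $\sigma \in S_{2n}$ corresponds to $C$ at all. I would therefore fix a non-alternating $C$ and show that the conditions defining ``corresponds to $C$'' are incompatible with $\sigma$ being a bijection, so that the contribution is an empty sum and hence $0$.

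First I would analyze a single smoothed crossing. If $c_i \in C$ is positive, then $\sigma(2i-1) = 2i$; since column $2i$ is now used, injectivity forces $\sigma(2i) = \pi(2i)$ (the choice $\sigma(2i) = 2i$ would repeat column $2i$). Tracking columns around the diagram, I would show that the smoothing ``skips over'' the arcs lying in the oriented $\pi$-interval from $a_{2i-1}$ to $a_{2i}$, and that bijectivity then forces every such arc to be a fixed point of $\sigma$ while all remaining arcs follow $\pi$. Thus a single smoothing rigidly determines the rest of $\sigma$ and produces a forced block of fixed points, a ``forced-fix interval''; the degenerate case $\pi(2i-1) = 2i$ is exactly a special curl, already handled by the splitting convention. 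The negative case is symmetric, with the interval running from $a_{2i}$ to $a_{2i-1}$.

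Next I would combine two smoothed crossings $c_i, c_j \in C$. Both the source and the target arc of each smoothing are \emph{moved} by $\sigma$ (one through the block entry, one through $\pi$), so neither may lie in the forced-fix interval of the other, on pain of being required to be a fixed point at the same time. I would show that this mutual compatibility of the two oriented forced-fix intervals is precisely the condition that the two chords form one of the alternating configurations of Figure \ref{F:configuration}; when it fails, some source or target arc is pinned as a fixed point, creating a repeated column and so no bijection. Running this over all pairs, pairwise compatibility of the smoothings is equivalent to $C$ being alternating, so a non-alternating $C$ must contain an incompatible pair and admits no corresponding permutation.

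The main obstacle is this last equivalence: matching the purely combinatorial data — which arc of each crossing is the smoothing source, and hence the orientation of its forced-fix interval — to the over/under and sign information recorded on the Gauss diagram. This is where the conventions of Figure \ref{F:alexander} and the endpoint-sign labeling of Section \ref{S:virtual} must be tracked carefully, since it is the \emph{oriented} intervals, not the underlying undirected chords, whose compatibility corresponds to the alternation of over- and under-crossings (so that, for instance, a non-alternating nested pair still yields crossing oriented intervals). I would pin this down by checking the two two-chord configurations of Figure \ref{F:configuration} and the five three-chord configurations directly, and then conclude the general case from the interval bookkeeping above, with the special-curl convention ensuring the degenerate placements need no separate treatment.
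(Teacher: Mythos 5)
Your overall framing is right and matches the mechanism of the paper's proof: since every diagonal entry $b_{t,t}$ and every entry $b_{t,\pi(t)}=-1$ is nonzero (with special curls handled by the splitting convention), a permutation corresponding to $C$ automatically yields a nonzero product, so the lemma amounts to showing that for non-alternating $C$ the defining conditions are incompatible with $\sigma$ being a bijection. Your single-crossing analysis is also correct. But the reduction in your third paragraph contains a genuine gap: compatibility of smoothings is \emph{not} a pairwise property, and the claimed equivalence ``pairwise compatibility of forced-fix intervals $\Longleftrightarrow$ $C$ alternating'' is false. The forced-fix interval of a crossing $c_i$ is not intrinsic to $c_i$: the forward chase from the over-arc of $c_i$ is forced only until it meets the \emph{first} pinned row of \emph{any} smoothing in $C$, so other chords of $C$ truncate and reshape the intervals. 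Concretely, take three chords with over/under passages in cyclic order $O(c_1), U(c_3), O(c_2), U(c_1), O(c_3), U(c_2)$. This triple is alternating ($OUOUOU$), and one checks directly that the unique corresponding permutation exists (each free row is forced to its $\pi$-successor and the result is a $6$-cycle), consistent with Proposition \ref{P:alexdet3}. Yet the pair $\{c_1,c_2\}$ in isolation reads $O,O,U,U$ --- non-alternating, and indeed if you delete $c_3$ the chase dead-ends and no bijection corresponds to $\{c_1,c_2\}$. So an alternating $C$ can contain a pair that is ``incompatible'' by your criterion; if your pairwise argument were valid it would force this alternating triple to contribute $0$, contradicting Proposition \ref{P:alexdet3}. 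Your proposed remedy of checking the two- and three-chord configurations directly cannot repair this, since the equivalence already fails at three chords.

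The correct localization --- and what the paper actually does --- replaces ``non-alternating pair in isolation'' by ``same-type passages adjacent \emph{within} $C$.'' If $C$ is not alternating, then among the $C$-endpoints two consecutive ones are both overcrossings, i.e., there are $c_i, c_j \in C$ and a segment of $D$ from the overcrossing of $c_i$ to the overcrossing of $c_j$ passing through \emph{no other crossings of $C$}. Along that $C$-free segment the chase is fully forced with no interference: $\sigma(2i-1)=2i$ pins column $2i$, injectivity forces $\sigma^k(2i)=\pi^k(2i)$ step by step, and when the chain reaches $\pi^r(2i)=2j$ the column $2j$ is already occupied by $\sigma(2j-1)=2j$, so injectivity would require $\pi^{r-1}(2i)=2j-1$ --- i.e., the segment would pass through the undercrossing of $c_j$ first, contradicting the choice of the segment. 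That adjacency hypothesis is exactly what your pairwise scheme discards, and it is what makes the forcing argument sound.
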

\begin{proof}
Since $C$ is not alternating, it contains at least two consecutive overcrossings (and at least two consecutive undercrossings).  So there are crossings $c_i$ and $c_j$ in $C$ such that there is a segment of $D$ running from the overcrossing of $c_i$ to the overcrossing of $c_j$ without passing through any other crossings of $C$.  We will assume that $c_i$ and $c_j$ are positive crossings (the other cases are similar).

A permutation $\s$ corresponding to $C$ must have $\s(2i-1) = 2i$ and $\s(2j-1)=2j$, where arcs $a_{2i}$ and $a_{2j}$ are the overcrossing arcs.  Hence, $\s(2i) \neq 2i$, so the only other possibility where $b_{2i, \s(2i)} \neq 0$ is $\s(2i) = \pi(2i)$.  Then $\s^2(2i) = \s(\pi(2i)) \neq \pi(2i)$ (since $2i \neq \pi(2i)$), so $\s^2(2i) = \pi^2(2i)$ (otherwise the term is 0).  Continuing in this way, $\s^k(2i) = \pi^k(2i)$, until $\s^r(2i) = \pi^r(2i) = 2j$.  But then $\s^{r-1}(2i) = \pi^{r-1}(2i) = \s^{-1}(2j) = 2j-1$.  But then the segment from the overcrossing of $c_i$ goes first to the {\em undercrossing} of $c_j$, which is a contradiction.  So there must be some $b_{\s^k(2i), \s^{k+1}(2i)}$ which is 0, and therefore the term is 0.
\end{proof}

An alternating configuration $C$ of $k$ crossings in a virtual knot divides the knot into $2k$ segments.  Of these segments, $k$ run from an undercrossing in $C$ to a subsequent overcrossing in $C$ (possibly going through other crossings that are not in $C$); we will call these segments {\em ascending}.  The other $k$, called {\em descending} segments, run from overcrossings in $C$ to undercrossings in $C$.  A {\em smoothing} of a configuration $C$ is the result of smoothing every crossing in $C$ as shown in Figure \ref{F:smoothing}.  Note that the smoothing joins the two ascending segments at a crossing and the two descending segments.  

\begin{figure}[htbp]
\begin{center}
\scalebox{1}{\includegraphics{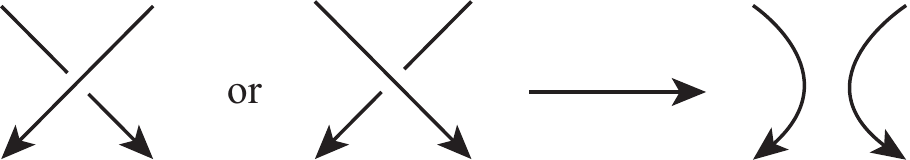}}
\end{center}
\caption{Smoothing a crossing in a configuration $C$.}
\label{F:smoothing}
\end{figure}

After we've smoothed the configuration $C$, we are left with a link where each component is composed entirely of ascending or descending segments from the original knot; we refer to the components as ascending or descending accordingly.

\begin{prop} \label{P:alexdet3}
Let $C$ be an alternating configuration of $m$ crossings in a virtual knot diagram $D$ (where $m \geq 1$).  Let $L = L_d \cup L_a$ be the link formed by smoothing the crossings in $C$, where $L_d$ is the sublink formed from the descending components of $L$, and $L_a$ is the sublink formed from ascending components of $L$.  Then at most one permutation corresponding to $C$ makes a non-zero contribution to $\Delta_0(D)$, and its contribution is:
$$(1-uv)^{neg(C)}(1-(uv)^{-1})^{pos(C)}(-1)^{m+\vert L_d \vert}u^{-U(L_a)}v^{-O(L_a)}$$
where $U(L_a)$ and $O(L_a)$ are the number of under- and over-crossings along the components of $L_a$, counted with sign, $neg(C)$ and $pos(C)$ are the number of negative and positive crossings in $C$, and $\vert L_d \vert$ is the number of components of $L_d$.
\end{prop}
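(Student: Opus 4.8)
The plan is to evaluate the unique surviving term directly from the explicit entries $b_{ij}$ of $M-P$ listed above, organizing all of the bookkeeping around the $m$ ascending and $m$ descending segments into which $C$ cuts the knot. First I would record what a permutation $\s$ corresponding to $C$ is forced to do at the crossings of $C$: at a positive crossing it uses $b_{2i-1,2i}=1-(uv)^{-1}$ and at a negative crossing $b_{2i,2i-1}=1-uv$, while injectivity forbids the companion arc (the arc $2i$ at a positive crossing, $2i-1$ at a negative one) from being a fixed point, so $\s$ must agree with $\pi$ there. These edges already account for the factor $(1-uv)^{neg(C)}(1-(uv)^{-1})^{pos(C)}$, and they isolate what remains to be explained: the diagonal entries $u^{\pm1},v^{\pm1}$ coming from fixed points $\s(t)=t$, the entries $-1$ coming from edges $\s(t)=\pi(t)$, and the global sign $(-1)^\s$.

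Next I would establish the ``at most one permutation'' clause by a propagation argument. Imposing the redirect $\s(2i-1)=2i$ at a positive crossing overfills the image at the arc $2i$ and underfills it at $\pi(2i-1)$; the only repair available through the permitted moves $\s(t)\in\{t,\pi(t)\}$ is to turn $\pi^{-1}(2i)$ into a fixed point, which pushes the surplus one step backward along $\pi$. Iterating, the surplus travels backward precisely along the ascending segment terminating at the over-crossing of $c_i$, and is absorbed at that segment's first arc, where the deficit created by the redirect of the crossing of $C$ beginning the segment already sits. Because $C$ is alternating, these segments pair the $m$ surpluses with the $m$ deficits bijectively, so every free index is forced: interior arcs of ascending segments become fixed points, and all other arcs follow $\pi$. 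Hence there is a single candidate $\s$, and the proposition reduces to evaluating its term.

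Then I would match the cycle structure of this $\s$ to the components of $L$ and read off the remaining factors. Tracing $\s$ shows that following $\pi$ through a descending segment, together with the two-step detour $2i-1\mapsto 2i\mapsto\pi(2i)$ at each crossing of $C$ it meets, closes up into exactly one cycle per descending component of $L$; these cycles use only redirect edges and $-1$ edges, so they contribute $(-1)^D$, where $D$ is the total number of descending arcs. The arcs of the ascending segments are instead fixed points (all but the last arc of each segment, which is the companion arc already consumed by a descending cycle), and each fixed point contributes the diagonal entry $u^{-\e}$ at an under-crossing and $v^{-\e}$ at an over-crossing of the crossing it abuts; after checking that the over/under labeling of $2i-1,2i$ matches the diagonal entries of $M_+$ and $M_-$, the product of these entries is exactly $u^{-U(L_a)}v^{-O(L_a)}$. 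Counting the cycles of $\s$ as $\vert L_d\vert$ descending cycles together with $A-m$ fixed points (where $A$ is the number of ascending arcs and $A+D=2n$) gives $(-1)^\s=(-1)^{2n-\vert L_d\vert-(A-m)}=(-1)^{D+\vert L_d\vert+m}$, which combines with the $(-1)^D$ to produce the claimed $(-1)^{m+\vert L_d\vert}$.

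The hard part will be the bookkeeping that the smoothing picture makes plausible but does not by itself justify. The subtle point is that the cycles of $\s$ are \emph{not} the naive traces of the components of $L$: the last arc of each ascending segment is swallowed by a descending cycle, so I must verify carefully that the forced companion edges really do thread the descending segments into $\vert L_d\vert$ cycles, and that the interior arcs of the ascending segments are in bijection with the crossings met by $L_a$ (so that the diagonal product is the stated monomial). The second delicate point is the parity computation, where the two occurrences of $D$ must cancel; this follows from $(-1)^\s=(-1)^{2n-c(\s)}$ once $c(\s)$ is counted correctly. I would pin down the over/under conventions and confirm the matching on the two smallest cases, $m=1$ and the two-chord alternating configurations of Figure \ref{F:configuration}, before committing to the general argument.
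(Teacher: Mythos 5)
Your proposal is correct and follows essentially the same route as the paper's proof: it forces the structure of the unique contributing permutation segment-by-segment (redirect entries at the crossings of $C$, interior arcs of ascending segments as fixed points giving the diagonal monomial $u^{-U(L_a)}v^{-O(L_a)}$, descending portions following $\pi$ and threading into one cycle per component of $L_d$, with the last arc of each ascending segment absorbed into a descending cycle) and then evaluates entries and sign. The only differences are cosmetic: your surplus/deficit image-counting replaces the paper's iterative forcing ($\sigma^k(2i)=\pi^k(2i)$ forward along descending segments, fixed points propagating backward along ascending ones), and your global sign formula $(-1)^{2n-c(\sigma)}$ replaces the paper's per-cycle factor $(-1)^{a+s+1}$, both yielding $(-1)^{m+\vert L_d\vert}$.
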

\begin{proof}
Let $\s$ be a permutation corresponding to $C$.  In other words, $b_{2i, \s(2i)} = 1-uv$ or $b_{2i-1, \s(2i-1)} = 1-(uv)^{-1}$ if and only if $c_i$ is in $C$.  Suppose that, for all $t$, $b_{t, \s(t)} \neq 0$.

Let $c_i$ and $c_j$ be two adjacent crossings in $C$ (so there is a segment of $D$ from $c_i$ to $c_j$ that does not pass through any other crossings of $C$).  Assume that $c_i$ and $c_j$ are both positive (the other cases are similar).  Then $\s(2i-1) = 2i$ and $\s(2j-1) = 2j$.  

We will first suppose that the descending segment of $D$ running from the overcrossing of $c_i$ to the undercrossing of $c_j$ does not pass through any other crossings in $C$.  So for some $r$, $\pi^r(2i) = 2j-1$.  Since $b_{2i, \s(2i)} \neq 0$, we must have $\s(2i) = 2i$ or $\s(2i) = \pi(2i)$.  But $\s(2i-1) = 2i$, so the only possibility is $\s(2i) = \pi(2i) \neq 2i$.  But then, $\s^2(2i) \neq \s(2i)$, so $\s^2(2i) = \pi(\s(2i)) = \pi^2(2i)$.  Continuing in this way, $\s^k(2i) = \pi^k(2i)$ for $1 \leq k \leq r$.  If $a_t$ is on this segment, so $t = \pi^{k-1}(2i)$ for some $1 \leq k \leq r$, then $b_{t, \s(t)} = b_{t, \pi(t)} = -1$.

Now suppose instead that the ascending segment of $D$ running from the undercrossing of $c_i$ to the overcrossing of $c_j$ does not pass through any other crossings in $C$.  So for some $r$, $\pi^r(2i-1) = 2j$.  Since $\pi^{r-1}(2i-1) \neq 2j-1$, and $b_{\pi^{r-1}(2i-1), \s(\pi^{r-1}(2i-1))} \neq 0$, we must have $\s(\pi^{r-1}(2i-1)) = \pi^{r-1}(2i-1)$.  Continuing in this way, $\s(\pi^{r-k}(2i-1)) = \pi^{r-k}(2i-1)$ for $1 \leq k \leq r-1$.  If $a_t$ is on this segment, so $t = \pi^k(2i-1)$ for some $1 \leq k \leq r-1$, then $b_{t, \s(t)} = b_{t,t} = u^{\pm 1}$ or $v^{\pm 1}$, depending on the crossing (as shown in Figure \ref{F:contribution}).

\begin{figure}[htbp]
\begin{center}
\scalebox{.8}{\includegraphics{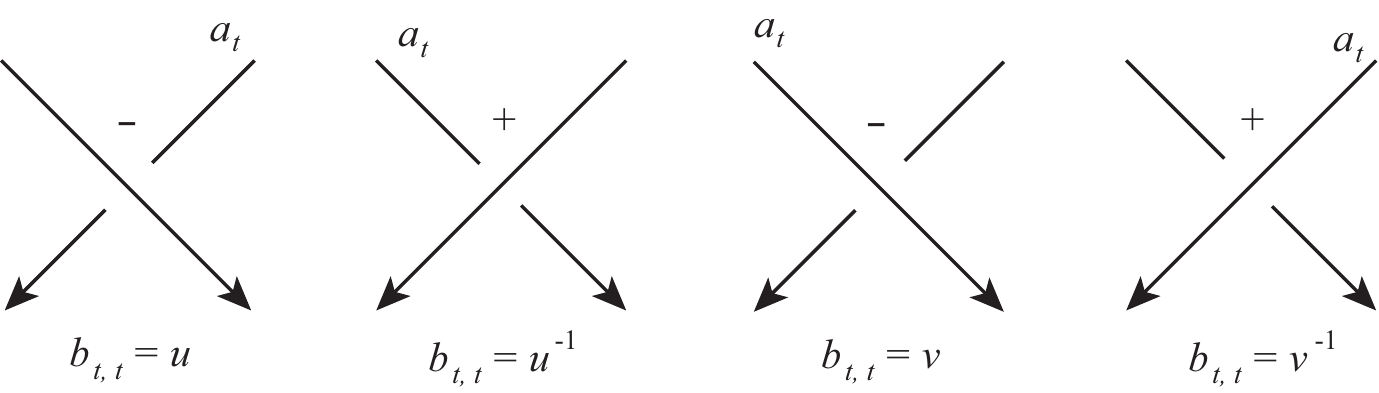}}
\end{center}
\caption{Values of $b_{t,t}$ at each crossing along $\a$.}
\label{F:contribution}
\end{figure}

So there is a unique permutation $\s$ corresponding to $C$ with $b_{t, \s(t)} \neq 0$ for all $t$.  $\s$ has a one-cycle for each arc on a component of $L_a$.  $\s$ also has a cycle for each component of $L_d$ of length $a+s$, where $a$ is the number of arcs in the component and $s$ is the number of smoothings.  This cycle has the form $(\dots, 2i-1, 2i, \pi(2i), \pi^2(2i), \dots, \pi^{r-1}(2i), 2j-1, 2j, \dots)$.  So the sign of the permutation $\s$ is the product over all components of $L_d$ of $(-1)^{a+s+1}$.  Then the contribution to the determinant is:
\begin{align*}
(-1)^{\s}b_{1,\s(1)}\cdots b_{2n, \s(2n)} &= (-1)^{\sum_{L_d}{(a+s+1)}}(1-uv)^{neg(C)}(1-(uv)^{-1})^{pos(C)}(-1)^{\sum_{L_d}{a}}u^{-U(L_a)}v^{-O(L_a)} \\
&= (1-uv)^{neg(C)}(1-(uv)^{-1})^{pos(C)}(-1)^{\sum_{L_d}{s+1}}u^{-U(L_a)}v^{-O(L_a)} \\
&= (1-uv)^{neg(C)}(1-(uv)^{-1})^{pos(C)}(-1)^{m+ \vert L_d\vert}u^{-U(L_a)}v^{-O(L_a)}
\end{align*}

It is possible that a segment of $D$ will run from a crossing $c_i$ back to $c_i$.  However, the arguments above work equally well if $j = i$.
\end{proof}

\begin{figure}[htbp]
\begin{center}
\scalebox{.8}{\includegraphics{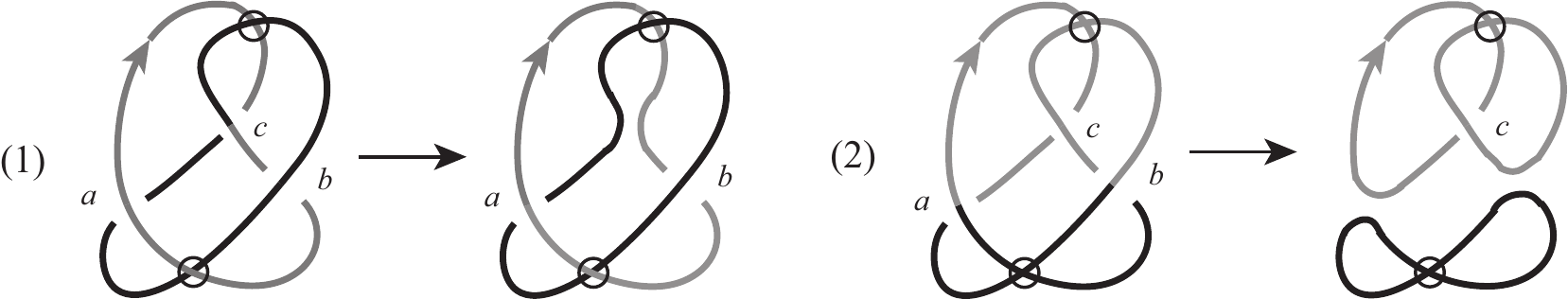}}
\end{center}
\caption{Contributions of alternating configurations to $\Delta_0(D)$.}
\label{F:Prop4Example}
\end{figure}

\begin{example}
As an example, consider the virtual knot 3.1 in Green's table \cite{gr}, shown in Figure \ref{F:Prop4Example}.  This knot has three crossings, $a$, $b$ and $c$; $a$ is a positive crossing while $b$ and $c$ are negative crossings.  Each crossing by itself is an alternating configuration, and $\{a, b\}$ is an alternating configuration of two crossings.  We will use Proposition \ref{P:alexdet3} to compute the contributions from (1) configuration $\{c\}$ and (2) configuration $\{a,b\}$.
\begin{enumerate}

\item The result of smoothing crossing $c$ is shown on the left in Figure \ref{F:Prop4Example}.  $L_d$ is shown in gray.  Observe that $\vert L_d \vert = 1$, $U(L_a) = 1$ (since $a$ is a positive crossing) and $O(L_a) = -1$ (since $b$ is a negative crossing).  So the contribution of this configuration to $\Delta_0(D)$ is:
$$(1-uv)(-1)^{1+1}u^{-1}v^1 = u^{-1}v - v^2$$

\item The result of smoothing configuration $\{a, b\}$ is shown on the right in Figure \ref{F:Prop4Example}. Again, $L_d$ is shown in gray.  Observe that $\vert L_d \vert = 2$ and $U(L_a) = O(L_a) = 0$ (since $L_a$ has no crossings).  So the contribution of this configuration to $\Delta_0(D)$ is:
$$(1-uv)(1-(uv)^{-1})(-1)^{2+2}u^{0}v^0 = 2-uv-u^{-1}v^{-1}$$

\end{enumerate}
\end{example}
\medskip

We can group the terms in the sum for $\det(M-P)$ according to how many factors of $(1-uv)$ or $(1-(uv)^{-1})$ they contain (after splitting any special curls as described previously).  Note that $1- (uv)^{-1} = -(uv)^{-1}(1-uv)$.  Then we can write
$$\Delta_0(D)(u,v) = \sum_{k=0}^{n}{(1-uv)^k f_k(u,v)}$$

We will give precise descriptions of $f_0, f_1$ and $f_2$.  Recall that the {\em writhe} of a knot diagram $D$, denoted $Wr(D)$,  is the sum of the signs of all crossings in the diagram.

\begin{prop} \label{P:alexdet1}
Given a virtual knot diagram $D$ of a virtual knot, then \begin{enumerate}
	\item $f_0(u,v) = (uv)^{-Wr(D)} - 1$, and
	\item ${\displaystyle f_1(u,v) = \sum_c{-\e(c)(uv)^{-(1+\e(c))/2}u^{-LU(c)}v^{LO(c)}}}$, where the sum is over all classical crossings $c$.
	\item ${\displaystyle f_2(u,v) = \sum_{\{c_1, c_2\}\in S_1}{\e_1\e_2(uv)^{-(\e_1+1)/2 - (\e_2+1)/2}u^{-LU(c_1) + RU(c_2) + \e_2}v^{LO(c_1) - RO(c_2) + \e_2}}}$

$$ - \sum_{\{c_1, c_2\}\in S_2}{\e_1\e_2(uv)^{-(\e_1+1)/2 - (\e_2+1)/2}u^{-LU(c_1) - LU(c_2)}v^{LO(c_1) + LO(c_2)}}$$
where $S_1$ is the set of all (unordered) pairs of crossings in configuration $A$ and $S_2$ is the set of all pairs of crossings in configuration $B$ (in the Gauss diagram), as shown in Figure \ref{F:configuration}.
\end{enumerate}
\end{prop}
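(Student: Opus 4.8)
The plan is to read off $f_0$, $f_1$ and $f_2$ directly from Proposition \ref{P:alexdet3} by sorting the alternating configurations $C$ according to the number of crossings $m$ they contain, since a configuration of $m$ crossings contributes exactly $m$ factors of $(1-uv)$ or $(1-(uv)^{-1})$. Using $1-(uv)^{-1} = -(uv)^{-1}(1-uv)$ to rewrite the $pos(C)$ factors of $(1-(uv)^{-1})$, the contribution of $C$ from Proposition \ref{P:alexdet3} becomes $(1-uv)^m$ times the reduced factor
$$(-1)^{pos(C)+m+\vert L_d\vert}(uv)^{-pos(C)}u^{-U(L_a)}v^{-O(L_a)}.$$
Since $neg(C)+pos(C)=m$, the product of the crossing signs satisfies $\prod_{c\in C}\e(c) = (-1)^{neg(C)} = (-1)^{m}(-1)^{pos(C)}$, so the leading sign can be expressed through $\e_1\e_2$ in the two-crossing case. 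Thus $f_k$ is the sum of these reduced factors over all alternating configurations of exactly $k$ crossings, and the whole proof reduces to computing $\vert L_d\vert$, $U(L_a)$ and $O(L_a)$ for the relevant small configurations and matching them to the stated index expressions.

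For $f_0$ I would argue directly rather than through Proposition \ref{P:alexdet3}, which assumes $m\geq 1$. The only permutations corresponding to $C=\emptyset$ have $\s(t)\in\{t,\pi(t)\}$ for every $t$; since $\pi$ is a single $2n$-cycle, the set on which $\s$ agrees with $\pi$ must be $\pi$-invariant, forcing $\s=\mathrm{id}$ or $\s=\pi$. The identity contributes the product over the crossings of the diagonal entries, which equals $(uv)^{-\e(c)}$ at each crossing, giving $(uv)^{-Wr(D)}$; the full cycle $\pi$ contributes $(-1)^{\s}\prod_t b_{t,\pi(t)} = (-1)^{2n-1}(-1)^{2n} = -1$. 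Hence $f_0 = (uv)^{-Wr(D)}-1$.

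For $f_1$ every single crossing $c$ is trivially an alternating configuration with $m=1$, and smoothing it splits the knot into one ascending and one descending loop, so $\vert L_d\vert = 1$. Identifying the ascending loop $L_a$ with the arc $\b$ to the left of the chord $c$, and using that an over-endpoint carries the label $-\e$ while the index $O(L_a)$ counts over-crossings with the crossing sign $+\e$, I get $U(L_a) = LU(c)$ and $O(L_a) = -LO(c)$. Substituting $m=1$, $\vert L_d\vert = 1$ and $pos(C) = (1+\e(c))/2$ into the reduced factor and simplifying the signs (so that $(-1)^{pos(C)} = -\e(c)$) yields exactly $-\e(c)(uv)^{-(1+\e(c))/2}u^{-LU(c)}v^{LO(c)}$, summed over all $c$.

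The case $f_2$ is the real work, and I expect it to be the main obstacle. The alternating configurations of two chords are of the two types $A$ and $B$ in Figure \ref{F:configuration}, and they differ precisely in how smoothing both crossings reconnects the four segments: type $A$ produces two descending components ($\vert L_d\vert = 2$, whence the sign $+\e_1\e_2$) while type $B$ produces one ($\vert L_d\vert = 1$, whence $-\e_1\e_2$); a parity count shows that in both cases $\vert L_a\vert+\vert L_d\vert=3$. The remaining task is to express $U(L_a)$ and $O(L_a)$ through the chord indices. For type $B$ the two ascending components are assembled from the two left arcs, giving $U(L_a) = LU(c_1)+LU(c_2)$ and $O(L_a) = -(LO(c_1)+LO(c_2))$, which matches the $S_2$ sum. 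For type $A$ the single ascending component mixes the left side of $c_1$ with the right side of $c_2$, producing $-LU(c_1)+RU(c_2)$ and $LO(c_1)-RO(c_2)$; the two extra $+\e_2$ corrections arise because, after smoothing, the endpoints of the chord $c_2$ themselves lie on the ascending component and must be counted. The delicate points are tracking exactly which arcs land in $L_a$ versus $L_d$, getting the $\pm\e_2$ boundary corrections right, and fixing the convention for which crossing of a type-$A$ pair is $c_1$ and which is $c_2$, since the type-$A$ summand is not symmetric in the two crossings.
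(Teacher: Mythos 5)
Your strategy coincides with the paper's: group permutations by the size of the corresponding alternating configuration, prove $f_0$ directly from the two permutations $\mathrm{id}$ and $\pi$ (your sign count $(-1)^{2n-1}\cdot(-1)^{2n}=-1$ and the per-crossing diagonal product $(uv)^{-\e(c)}$ are exactly the paper's computation, which it borrows from Silver--Williams), and read $f_1$ and $f_2$ off Proposition \ref{P:alexdet3}. Your reduced factor $(-1)^{pos(C)+m+\vert L_d\vert}(uv)^{-pos(C)}u^{-U(L_a)}v^{-O(L_a)}$ is a correct repackaging, the conversions $(-1)^{pos(C)}=-\e(c)$ for $m=1$ and $(-1)^{pos(C)}=\e_1\e_2$ for $m=2$ are right, the identifications $U(L_a)=LU(c)$, $O(L_a)=-LO(c)$ for a single crossing match the paper, and your component counts ($\vert L_d\vert=2$, $\vert L_a\vert=1$ for configuration $A$; $\vert L_d\vert=1$, $\vert L_a\vert=2$ for configuration $B$) and the type-$B$ exponents agree with the paper's proof.

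The one genuine flaw is in the only step of part (3) that you attempt to justify, namely the origin of the $+\e_2$ corrections in the type-$A$ term: your explanation is inverted. In Proposition \ref{P:alexdet3} the quantities $U(L_a)$ and $O(L_a)$ count only crossings \emph{not} in $C$ --- in its proof they arise from diagonal entries $b_{t,t}$ along the ascending segments (Figure \ref{F:contribution}), while the smoothed crossings contribute the $1-uv$ and $1-(uv)^{-1}$ factors instead --- so the endpoints of $c_2$ are \emph{not} counted along $L_a$. But the left arc of $c_1$ contains both endpoints of $c_2$, so $LU(c_1)$ already includes the label $\e_2$ of the under-endpoint of $c_2$, and $LO(c_1)$ includes the label $-\e_2$ of its over-endpoint; these must be \emph{excluded}, giving $U(L_a)=LU(c_1)-RU(c_2)-\e_2$ and, converting point labels to crossing signs, $O(L_a)=-LO(c_1)+RO(c_2)-\e_2$. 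The $+\e_2$'s in the statement then appear only after the negations in $u^{-U(L_a)}v^{-O(L_a)}$; note the two corrections come from different endpoints with opposite label conventions and only coincide after these sign flips. Carried out literally, your rule that the endpoints ``lie on the ascending component and must be counted'' produces corrections of the wrong sign. Finally, the convention issue you raise for type-$A$ pairs needs no fixing: since $LU(c)+RU(c)+\e(c)=Wr(D)$ and $LO(c)+RO(c)-\e(c)=-Wr(D)$ for every chord, the $u$-exponent equals $Wr(D)-LU(c_1)-LU(c_2)$ and the $v$-exponent equals $Wr(D)+LO(c_1)+LO(c_2)$, both symmetric in $c_1$ and $c_2$ --- this is exactly the paper's remark following the proposition.
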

\begin{proof}
The proof of the first part was contained in the proof of part (2) of Proposition \ref{P:factor} given in \cite{sw}.  We repeat it here for clarity, using the terminology we've developed.  As before, we can write the determinant of $B = M-P$ as
$$\sum_{\s \in S_{2n}}{(-1)^\s b_{1, \s(1)}\cdots b_{2n, \s(2n)}}$$
To find $f_0$, we assume that we do not use any entries equal to $1-uv$ or $1-(uv)^{-1}$ (i.e. we consider permutations corresponding to the empty set of crossings).  So we only consider $\s$ where $\s(i) = i$ or $\s(i) = \pi(i)$ for every $i$.  Suppose there is some $i$ for which $\s(i) = \pi(i) \neq i$.  But then $\s^2(i) \neq \s(i)$, so we must have $\s^2(i) = \pi(\s(i)) = \pi^{2}(i)$.  Continuing in this way, $\s^k(i) = \pi^{k}(i)$ for all $k$. Since $\pi$ is a single $2n$-cycle, this means $\s = \pi$.  So there are only two permutations we need to consider: $\s = id$ and $\s = \pi$.  Since $\pi$ is a cycle of even length, $(-1)^{\pi} = -1$.  Hence,
\begin{align*}
f_0(u,v) &= (b_{1,1}\cdots b_{2n,2n}) - (b_{1,\pi(1)}\cdots b_{2n,\pi(2n)}) \\
&=(uv)^{\#\ negative\ crossings - \#\ positive\ crossings} - (-1)^{2n} \\
&= (uv)^{-Wr(D)} - 1
\end{align*}

Now we consider $f_1$.  So we are considering sets of crossings with only one element, $C = \{c\}$.  The link created by smoothing this crossing will have two components, one ascending and one descending.  By Proposition \ref{P:alexdet3}, the contribution is $(1-uv)(-1)^{1+1}u^{-U(L_a)}v^{-O(L_a)} = (1-uv)u^{-U(L_a)}v^{-O(L_a)}$ if $c$ is negative, and $(1-(uv)^{-1})u^{-U(L_a)}v^{-O(L_a)}$ if $c$ is positive.  Since $1-(uv)^{-1} = -(uv)^{-1}(1-uv)$, once we factor out $1-uv$ the contribution to $f_1(u,v)$ is $-\e(c)(uv)^{-(1+\e(c))/2}u^{-U(L_a)}v^{-O(L_a)}$.

Since $C$ contains only one crossing, there is only one ascending segment of the knot diagram.  In the Gauss diagram, this corresponds to the arc of the boundary circle to the left of the chord $c$.  Hence $-U(L_a) = -LU(c)$ and $-O(L_a) = LO(c)$ (remember that we label overcrossings with $-\e(c)$, hence the change in sign).  So then the contribution of $c$ to $f_1$ is $-\e(c)(uv)^{-(1+\e(c))/2}u^{-LU(c)}v^{LO(c)}$, as desired.

Finally, we turn to $f_2(u,v)$.  Let $C = \{c_i, c_j\}$ be a set of two crossings; by Lemma \ref{L:nonalternating} we only need to consider when $C$ is alternating.  Then by Proposition \ref{P:alexdet3} (with $m = 2$), the contributions of the permutations corresponding to $C$ to $\det(M-P)$ is 
$$(1-uv)^{neg(C)}(1-(uv)^{-1})^{pos(C)}(-1)^{\vert L_d \vert}u^{-U(L_a)}v^{-O(L_a)}.$$

In configuration $A$, $\vert L_d \vert = 2$.  The two ascending segments of the boundary circle are on the left side of both chords.  Counting the crossings along these arcs is equivalent to counting the difference between those to the left of chord $c_i$ (except chord $c_j$ itself) and those to the right of chord $c_j$.  So the term is
$$(1-uv)^{neg(C)}(1-(uv)^{-1})^{pos(C)} u^{-(LU(c_i) - RU(c_j) - \e(c_j))}v^{LO(c_i) - RO(c_j) + \e(c_j)}$$
$$=(1-uv)^{neg(C)}(1-(uv)^{-1})^{pos(C)} u^{-LU(c_i) + RU(c_j) + \e(c_j)}v^{LO(c_i) - RO(c_j) + \e(c_j)}$$

If the crossings are in configuration $B$, then $\vert L_d \vert = 1$.  One of the two ascending segments is on the left of chord $c_i$, and the other is on the left of chord $c_j$.  So the term is
$$-(1-uv)^{neg(C)}(1-(uv)^{-1})^{pos(C)} u^{-(LU(c_i) + LU(c_j))}v^{LO(c_i) + LO(c_j)}$$
$$=-(1-uv)^{neg(C)}(1-(uv)^{-1})^{pos(C)}u^{-LU(c_i) - LU(c_j)}v^{LO(c_i) + LO(c_j)}$$

As with $f_1$, $(1-uv)^{neg(C)}(1-(uv)^{-1})^{pos(C)} = \e_i\e_j(uv)^{-(\e_i+1)/2 - (\e_j+1)/2}$, completing the proof.
\end{proof}

\begin{rem}
If we switch the roles of $c_1$ and $c_2$ in the first sum in the formula for $f_2(u,v)$, we will get the same exponents, since $LU(c_1) + RU(c_1) + \e_1 = LU(c_2) + RU(c_2) + \e_2 = Wr(D)$, and similarly for the overcrossings.
\end{rem}

A similar result could be found for each $f_k(u,v)$.  In general, we would need to look at each of the alternating configurations of $n$ chords.  However, the formulas will quickly become very complex; even $f_3$ involves five different terms.

\section{Writhe polynomial} \label{S:writhe}

Now we turn our attention to writhes.  We have already mentioned the {\em writhe} of a virtual knot diagram, $Wr(D)$.  This has been generalized by several authors \cite{ka2, ch, cg, st}.  We will use the $n$-writhe defined by Satoh and Taniguchi \cite{st}.  Unlike the writhe, where we sum the signs of all crossings, for the $n$-writhe we sum the signs of crossings with index $n$ (see Section \ref{S:virtual}).  So given a knot diagram (or, equivalently, Gauss diagram) $D$, the {\em $n$-writhe} of $D$ is $w_n(D) = \sum_{Ind(c) = n}{\e(c)}$.

\begin{figure}[htbp]
\begin{center}
\scalebox{.6}{\includegraphics{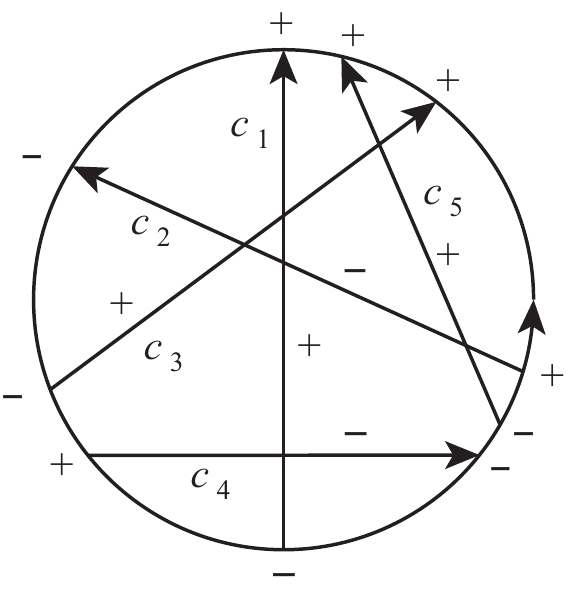}}
\end{center}
\caption{A labeled Gauss diagram $D$}
\label{F:nwrithe}
\end{figure}

\begin{example}
Consider the Gauss diagram $D$ of degree 5 shown in Figure \ref{F:nwrithe}.  The indices of the chords are:
$$Ind(c_1) = 1, \qquad Ind(c_2) = 3, \qquad Ind(c_3) = -1, \qquad Ind(c_4) = -1, \qquad Ind(c_5) = 2$$
and the $n$-writhes are
$$w_{-1}(D) = 0, \qquad w_1(D) = 1, \qquad w_2(D) = 1, \qquad w_3(D) = -1, \qquad w_n(D) = 0 {\rm \ (for\ all\ other\ }n)$$
\end{example}

Satoh and Taniguchi showed that $w_n$ is a virtual knot invariant for $n \neq 0$.  We will extend this result, with a modification, to $n = 0$.

\begin{lem} \label{L:nwrithe}
Let $D$ and $D'$ be virtual knot diagrams related by a finite sequence of Reidemeister moves.  Then  \begin{enumerate}
	\item $w_n(D) = w_n(D')$ for any $n \neq 0$. \cite{st}
	\item $w_0(D) - Wr(D) = w_0(D') - Wr(D')$.
\end{enumerate}
\end{lem}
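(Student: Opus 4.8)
The plan is to derive part (2) directly from part (1) by observing that the ordinary writhe is nothing more than the sum of all the $n$-writhes. First I would note that every classical crossing $c$ of $D$ has a well-defined index $Ind(c)\in\Z$, and that $c$ contributes its sign $\e(c)$ to exactly one member of the family $\{w_n(D)\}$, namely to $w_{Ind(c)}(D)$. Since $D$ has only finitely many crossings, all but finitely many $w_n(D)$ vanish, and collecting the signs crossing-by-crossing gives
$$Wr(D)=\sum_{c}\e(c)=\sum_{n\in\Z}w_n(D).$$
Isolating the $n=0$ term and rearranging yields the identity
$$w_0(D)-Wr(D)=-\sum_{n\neq 0}w_n(D).$$

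The second step is to apply part (1): each $w_n$ with $n\neq 0$ is unchanged under any Reidemeister move, so the finite sum on the right-hand side above is unchanged as well. Hence the left-hand side $w_0(D)-Wr(D)$ is also unchanged under a single Reidemeister move, and since $D$ and $D'$ are joined by a finite sequence of such moves, $w_0(D)-Wr(D)=w_0(D')-Wr(D')$, which is exactly the claim.

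The conceptual point, and the only thing that genuinely needs care, is that although neither $w_0$ nor $Wr$ is individually invariant, their failures to be invariant are identical and therefore cancel in the difference. As a sanity check one can verify this directly against the four generating moves $I_a,I_b,II_a,III_a$ of Figure \ref{F:genreid}: the only move that alters $Wr$ is the Reidemeister I move, and the crossing it creates or destroys is a curl whose two chord-endpoints are adjacent on the bounding circle, so one of its arcs is empty and its index is $0$; such a crossing therefore changes $w_0$ and $Wr$ by the very same $\pm 1$. The two crossings introduced by a Reidemeister II move carry opposite signs and a common index, so they leave every $w_n$ (including $w_0$) and $Wr$ fixed, and a Reidemeister III move preserves all indices and signs. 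I expect the only modest obstacle to be this bookkeeping verification that a Reidemeister I crossing has index $0$; everything else is either immediate from the decomposition or already furnished by part (1).
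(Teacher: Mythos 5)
Your proof is correct, and its main line is genuinely different from the paper's. The paper proves part (2) by direct case analysis of the moves: type (II) moves add or remove two crossings of opposite sign and the same index, type (III) moves preserve all signs and indices, and a type (I) move creates or destroys a crossing of index $0$ (every other chord has both endpoints on one side of the new chord), so $w_0$ and $Wr$ change by the same $\pm 1$ and the difference is preserved --- the cancellation is verified move by move. You instead derive part (2) formally from part (1) via the identity $Wr(D)=\sum_{n\in\Z}w_n(D)$, which holds because each crossing $c$ contributes $\e(c)$ to exactly one $n$-writhe, namely $w_{Ind(c)}(D)$, and only finitely many $w_n$ are nonzero; rearranging gives $w_0(D)-Wr(D)=-\sum_{n\neq 0}w_n(D)$, a finite sum of quantities that are invariant by part (1). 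Your derivation is shorter and makes the structural reason for the cancellation transparent: the failure of invariance of $w_0$ and of $Wr$ is literally the same object, since their difference is built entirely out of the invariants $w_n$, $n\neq 0$. What the paper's route buys is a self-contained record of exactly how each generating move acts on signs and indices (facts it reuses elsewhere, e.g.\ that a Reidemeister I crossing has index $0$), whereas your route treats part (1) as a black box from Satoh--Taniguchi. Note that your closing ``sanity check'' paragraph is, in effect, the paper's entire proof, so you have both arguments in hand; the formal derivation suffices on its own, and the bookkeeping you flagged as the only delicate point is needed only for the check, not for your main argument.
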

\begin{proof}
The first part was proved by Satoh and Taniguchi \cite{st}.  Reidemeister moves of type (II) add (or remove) two crossings of opposite sign and the same index, while moves of type (III) preserve the signs and indices of all crossings \cite{st}.  So these moves preserve both $w_n$ and the writhe.  A Reidemeister move of type 1 adds or removes a crossing of index 0 (since all other chords have both endpoints on one side), so both $w_0$ and the writhe are increased or decreased by 1 (all other $w_n$ are unchanged).  Hence the difference $w_0(D) - Wr(D)$ is preserved.
\end{proof}

So for any virtual knot $K$ with diagram $D$, we can define $w_n(K) = w_n(D)$ (for $n \neq 0$) and (slightly abusing notation) $w_0(K) = w_0(D) - Wr(D)$; these are well-defined invariants of the virtual knot.  We can combine these to define a {\em writhe polynomial} $W_K(t)$ by
$$W_K(t) = \sum_{n \in \Z}{w_n(K)t^n}$$
Since at most a finite number of the $w_n$'s are nonzero (in particular, $w_n = 0$ whenever $\vert n \vert$ is larger than the number of crossings), $W_K(t)$ is a well-defined Laurent polynomial.  In a classical knot $K$, all crossings have index 0, so $W_K(t) = 0$.  $W_K(t)$ is equivalent to both Kauffman's Affine Index Polynomial \cite{ka3} and Cheng and Gao's writhe polynomial \cite{cg}.  As remarked by Satoh and Taniguchi \cite{st}, it also determines Henrich's index polynomial \cite{he}.

By Proposition \ref{P:factor}, we can define $\Delta'_0(K)$ by $\Delta_0(K)(u,v) = (1-uv)\Delta'_0(K)(u,v)$. Our main result describes how $W_K(t)$ is determined by $\Delta'_0(K)$.  

\begin{thm} \label{T:alexwrithe}
For any virtual knot $K$, $W_K(t) = -\Delta'_0(K)(t, t^{-1})$.
\end{thm}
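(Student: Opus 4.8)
\section*{Proof proposal}

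The plan is to evaluate $\Delta'_0$ at $u=t$, $v=t^{-1}$ by exploiting the single algebraic fact that this substitution forces $uv=1$, hence $1-uv=0$. I would start from the $(1-uv)$-adic expansion $\Delta_0(D)(u,v)=\sum_{k=0}^n (1-uv)^k f_k(u,v)$ established before Proposition \ref{P:alexdet1}. Dividing by $1-uv$ gives
$$\Delta'_0(D)(u,v) = \frac{f_0(u,v)}{1-uv} + f_1(u,v) + \sum_{k\geq 2}(1-uv)^{k-1}f_k(u,v),$$
where the quotient $f_0/(1-uv)$ is a genuine Laurent polynomial because $f_0=(uv)^{-Wr(D)}-1$ vanishes at $uv=1$ (equivalently, by Proposition \ref{P:factor}). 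Setting $u=t$, $v=t^{-1}$ annihilates every summand with a positive power of $1-uv$, so only the $k=0$ and $k=1$ terms survive. The conceptual crux is exactly this collapse: the whole series reduces to a contribution from the empty configuration ($f_0$) and from single crossings ($f_1$), which are precisely the two pieces for which Proposition \ref{P:alexdet1} gives closed formulas.

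Next I would compute the $f_0$ contribution. Writing $f_0=(uv)^{-Wr(D)}-1$ and factoring (or taking the limit as $uv\to1$ via l'Hopital), one gets $\left.\frac{f_0}{1-uv}\right|_{uv=1}=Wr(D)$. This is the step that requires genuine care, since $f_0/(1-uv)$ presents a $0/0$ indeterminacy at $uv=1$; I would resolve it by the explicit factorization $(uv)^{-w}-1=-(uv)^{-w}(1-uv)(1+uv+\cdots+(uv)^{w-1})$ (and its analogue for $w<0$), whose value at $uv=1$ is manifestly $w=Wr(D)$. Because the normalization relating $\Delta_0(K)$ to $\Delta_0(D)$ is multiplication by a power of $uv$, which equals $1$ when $uv=1$, this evaluation is unaffected by normalization, so the diagram computation legitimately computes the invariant $\Delta'_0(K)(t,t^{-1})$.

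Then I would evaluate $f_1(t,t^{-1})$. From Proposition \ref{P:alexdet1} the factor $(uv)^{-(1+\e(c))/2}$ becomes $1$ at $uv=1$, and $u^{-LU(c)}v^{LO(c)}$ becomes $t^{-LU(c)-LO(c)}=t^{Ind(c)}$ using the identity $Ind(c)=-LO(c)-LU(c)$ from Section \ref{S:virtual}. Hence $f_1(t,t^{-1})=-\sum_c \e(c)t^{Ind(c)}=-\sum_n w_n(D)\,t^n$, where $w_n(D)=\sum_{Ind(c)=n}\e(c)$ is the (uncorrected) diagram $n$-writhe. Combining the two surviving pieces yields $\Delta'_0(t,t^{-1})=Wr(D)-\sum_n w_n(D)t^n$, so
$$-\Delta'_0(K)(t,t^{-1}) = \sum_n w_n(D)\,t^n - Wr(D).$$

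Finally I would reconcile this with the definition of the writhe polynomial. The term $-Wr(D)$ affects only the constant ($n=0$) coefficient, turning $w_0(D)$ into $w_0(D)-Wr(D)=w_0(K)$, while $w_n(D)=w_n(K)$ for $n\neq 0$ by Lemma \ref{L:nwrithe}. Thus $-\Delta'_0(K)(t,t^{-1})=\sum_{n\neq0}w_n(K)t^n+w_0(K)=W_K(t)$, as claimed. The main obstacle is therefore not the bookkeeping but the $f_0$ term's indeterminacy together with verifying that the leftover $-Wr(D)$ matches exactly the $n=0$ correction built into the definition of $w_0(K)$; everything else follows from the collapse of the expansion at $uv=1$ and direct substitution into the formulas of Proposition \ref{P:alexdet1}.
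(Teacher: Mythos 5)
Your proposal is correct and is essentially the paper's own proof: the same $(1-uv)$-adic expansion $\Delta_0(D)=\sum_k(1-uv)^k f_k$, the same factorization of $f_0=(uv)^{-Wr(D)}-1$ to extract the value $Wr(D)$ at $uv=1$, the same evaluation of $f_1(t,t^{-1})=-\sum_c\e(c)t^{Ind(c)}$ via $Ind(c)=-LO(c)-LU(c)$, and the same absorption of the leftover $-Wr(D)$ into the corrected constant term $w_0(K)=w_0(D)-Wr(D)$. The only blemish is a sign typo in your displayed factorization, which should read $(uv)^{-w}-1=(uv)^{-w}(1-uv)\bigl(1+uv+\cdots+(uv)^{w-1}\bigr)$ with no leading minus, consistent with the value $Wr(D)$ you correctly state.
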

\begin{proof}
Let $D$ be a diagram for $K$.  Recall that $\Delta_0(D)(u,v) = f_0(u,v) + (1-uv)f_1(u,v) + (1-uv)^2f_2(u,v) + \cdots + (1-uv)^nf_n(u,v)$.  From Proposition \ref{P:alexdet1}, $f_0(u,v) = (uv)^{-Wr(D)} - 1 = (uv)^{-Wr(D)}(1-(uv)^{Wr(D)}) = (uv)^{-Wr(D)}(1-uv)\sum_{i=0}^{Wr(D)-1}{(uv)^i}$.  So then 
$$\Delta'_0(D)(u,v) = (uv)^{-Wr(D)}\left(\sum_{i=0}^{Wr(D)-1}{(uv)^i}\right) + f_1(u,v) + (1-uv)f_2(u,v) + \cdots + (1-uv)^{n-1}f_n(u,v)$$
When we set $t = u = v^{-1}$, then $uv = 1$, so all terms after $f_1(u,v)$ disappear.  From Proposition \ref{P:alexdet1}, we get
\begin{align*}
\Delta'_0(D)(t,t^{-1}) &= (1)^{-Wr(D)}\left(\sum_{i=0}^{Wr(D)-1}{(1)^i}\right) + \sum_c{-\e(c)t^{-LU(c)}t^{-LO(c)}} \\
&= Wr(D) - \sum_c{\e(c)t^{Ind(c)}} \\
&= -W_K(t)
\end{align*}
\end{proof}

As a corollary, since $\Delta'_0$ has a factor of $(1-u)(1-v)$ by Proposition \ref{P:factor}, the writhe polynomial always has a factor of $(1-t)(1-t^{-1})$.

\begin{cor} \label{C:factor}
For any virtual knot $K$, $(1-t)(1-t^{-1})$ divides $W_K(t)$.  In particular, the sum of the coefficients of $W_K(t)$ is 0.
\end{cor}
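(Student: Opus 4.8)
The plan is to read this off directly from Theorem \ref{T:alexwrithe} together with the factorization in Proposition \ref{P:factor}, so the argument is short. First I would recall the two ingredients: $\Delta'_0(K)$ was defined by the relation $\Delta_0(K)(u,v) = (1-uv)\Delta'_0(K)(u,v)$, and Proposition \ref{P:factor} tells us that $(1-u)(1-v)(1-uv)$ divides $\Delta_0(K)(u,v)$. Combining these, we have
$$(1-u)(1-v)(1-uv) \mid (1-uv)\Delta'_0(K)(u,v).$$

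The next step is to cancel the common factor of $(1-uv)$. Since we are working in the ring of Laurent polynomials in $u$ and $v$, which is an integral domain, this cancellation is legitimate and gives $(1-u)(1-v) \mid \Delta'_0(K)(u,v)$. Writing $\Delta'_0(K)(u,v) = (1-u)(1-v)g(u,v)$ for some Laurent polynomial $g$ and then substituting $u = t$, $v = t^{-1}$, we obtain
$$\Delta'_0(K)(t,t^{-1}) = (1-t)(1-t^{-1})\,g(t,t^{-1}),$$
which is divisible by $(1-t)(1-t^{-1})$ as a Laurent polynomial in $t$. By Theorem \ref{T:alexwrithe}, $W_K(t) = -\Delta'_0(K)(t,t^{-1})$, so the same factor $(1-t)(1-t^{-1})$ divides $W_K(t)$, establishing the first claim.

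For the second claim, I would observe that the sum of the coefficients of $W_K(t)$ is exactly the value $W_K(1)$. Since $(1-t)$ is a factor of $W_K(t)$ by the first part, evaluating at $t = 1$ forces $W_K(1) = 0$, so the coefficients sum to zero. The only point requiring any care — and it is a minor one rather than a genuine obstacle — is the justification of the cancellation of $(1-uv)$ and the observation that divisibility by $(1-u)(1-v)$ is preserved under the substitution $u = t,\, v = t^{-1}$; both follow immediately from working in the Laurent polynomial ring, so no real difficulty arises.
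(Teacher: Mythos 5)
Your proof is correct and follows essentially the same route as the paper: combine Proposition \ref{P:factor} with the definition of $\Delta'_0$ to see that $(1-u)(1-v)$ divides $\Delta'_0(K)(u,v)$, substitute $u=t$, $v=t^{-1}$, and invoke Theorem \ref{T:alexwrithe}, with the coefficient-sum statement following from evaluation at $t=1$. The only difference is that you spell out the cancellation of $(1-uv)$ in the Laurent polynomial ring, which the paper leaves implicit.
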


This allows us to prove a conjecture of Benioff and the author \cite{bm} that originally motivated this investigation.  The {\em odd writhe} of a virtual knot (introduced by Kauffman \cite{ka2}) is the sum of the $n$-writhes where $n$ is odd.  Let $ow(K)$ denote the odd writhe, and recall that $\Delta_0(K)(u,v) = (1-u)(1-v)(1-uv)\overline{\Delta}_0(K)(u,v)$.

\begin{cor} \label{C:oddwrithe}
For any virtual knot $K$, $ow(K) = 2\overline{\Delta}_0(K)(-1,-1)$.
\end{cor}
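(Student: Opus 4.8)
The goal is to extract the odd writhe from the writhe polynomial by evaluating at $t = -1$, and then translate that evaluation into the $\overline{\Delta}_0$ normalization. The plan is to combine Theorem \ref{T:alexwrithe} with the factorization of Proposition \ref{P:factor}, being careful about which normalization of the Alexander polynomial the two corollaries use.

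\medskip

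First I would observe that, by definition, $ow(K) = \sum_{n \text{ odd}} w_n(K)$. Evaluating the writhe polynomial at $t = -1$ gives $W_K(-1) = \sum_{n}w_n(K)(-1)^n = \sum_{n \text{ even}}w_n(K) - \sum_{n \text{ odd}}w_n(K)$. Since the total writhe after the $w_0$-correction sums to $0$ by Corollary \ref{C:factor} (the sum of all coefficients vanishes), we have $\sum_{n \text{ even}}w_n(K) = -\sum_{n \text{ odd}}w_n(K) = -ow(K)$, so that $W_K(-1) = -2\,ow(K)$, i.e. $ow(K) = -\tfrac{1}{2}W_K(-1)$.

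\medskip

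Next I would feed this into Theorem \ref{T:alexwrithe}, which gives $W_K(t) = -\Delta'_0(K)(t,t^{-1})$, hence $ow(K) = \tfrac{1}{2}\Delta'_0(K)(-1,-1)$. The remaining task is purely a bookkeeping computation relating $\Delta'_0$ to $\overline{\Delta}_0$. Recall $\Delta_0 = (1-uv)\Delta'_0$ and $\Delta_0 = (1-u)(1-v)(1-uv)\overline{\Delta}_0$, so $\Delta'_0(u,v) = (1-u)(1-v)\overline{\Delta}_0(u,v)$. Substituting $u=v=-1$ gives $\Delta'_0(-1,-1) = (1-(-1))(1-(-1))\overline{\Delta}_0(-1,-1) = 4\,\overline{\Delta}_0(-1,-1)$. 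Therefore $ow(K) = \tfrac{1}{2}\cdot 4\,\overline{\Delta}_0(-1,-1) = 2\,\overline{\Delta}_0(-1,-1)$, as claimed.

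\medskip

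The step I expect to be the main obstacle — really the only point requiring care — is the factor-of-two accounting in passing from $W_K(-1)$ to $ow(K)$. This rests on knowing that the even-index writhes sum to the negative of the odd-index writhes, which is exactly the content of Corollary \ref{C:factor} (sum of coefficients zero), so I would want to make sure that corollary is invoked cleanly rather than re-derived. One subtlety worth checking is whether the $w_0$-correction (replacing $w_0(D)$ by $w_0(D)-Wr(D)$) affects the parity split: since $0$ is even, the correction only shifts the even sum, leaving the odd sum — and hence $ow(K)$ — genuinely an invariant, and the total-sum-zero identity still holds for the corrected coefficients. Apart from tracking these signs and the evaluation $(1-(-1))^2 = 4$, the argument is a direct chain of substitutions through the two normalizations.
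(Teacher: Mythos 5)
Your proposal is correct and follows essentially the same route as the paper's proof: evaluate $W_K$ at $t=-1$, use Corollary \ref{C:factor} to get $W_K(-1)=-2\,ow(K)$, apply Theorem \ref{T:alexwrithe}, and evaluate the factorization $\Delta'_0(u,v)=(1-u)(1-v)\overline{\Delta}_0(u,v)$ at $u=v=-1$. Your extra remark that the $w_0$-correction only affects the even part of the coefficient sum is a sound (if unstated in the paper) sanity check, not a deviation.
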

\begin{proof}
Recall that $W_K(t) = \sum_{n \in \Z}{w_n(K) t^n} = \sum_{even\ n}{w_n(K) t^n} + \sum_{odd\ n}{w_n(K) t^n}$.  Since the sum of the coefficients is 0, by Corollary \ref{C:factor}, $\sum_{even\ n}{w_n(K)} = -\sum_{odd\ n}{w_n(K)} = -ow(K)$.  This means that $W_K(-1) = -2ow(K)$.  So, by Theorem \ref{T:alexwrithe}:
$$2ow(K) = -W_K(-1) = \Delta'_0(K)(-1,-1) = (1+1)(1+1)\overline{\Delta}_0(K)(-1,-1) = 4\overline{\Delta}_0(K)(-1,-1)$$
Hence $ow(K) = 2\overline{\Delta}_0(K)(-1,-1)$.
\end{proof}

\section{Second order writhe polynomial} \label{S:2writhe}

Given a virtual knot diagram $D$ (equivalently, a Gauss diagram), let $S_1$ and $S_2$ be the sets of pairs of chords in configurations $A$ and $B$ (as in Figure \ref{F:configuration}).  Then we define $V_D(t)$ by 
\begin{align*}
V_D(t) = &\frac{Wr(D)(Wr(D)+1)}{2} + \sum_c{\e(c)t^{Ind(c)}\left[{LO(c) - \left(\frac{1+\e(c)}{2}\right)}\right]} \\
&+ \sum_{\{c_i, c_j\} \in S_1}{\e(c_i)\e(c_j)t^{Ind(c_i)+Ind(c_j)}} - \sum_{\{c_i, c_j\} \in S_2}{\e(c_i)\e(c_j)t^{Ind(c_i)+Ind(c_j)}}
\end{align*}

In our next proposition, we analyze how $V_D(t)$ behaves under Reidemeister moves.

\begin{prop} \label{P:writhereid}
Let $K$ be a virtual knot with diagram $D$, and $D'$ the result of applying one of the moves $I_a, I_b, II_a, III_a$ from Figure \ref{F:genreid}.  Then \begin{itemize}
	\item[$I_a$:] $V_{D'}(t) =V_D(t)$
	\item[$I_b$:] $V_{D'}(t) = V_D(t) - W_K(t)$
	\item[$II_a$:] $V_{D'}(t) = V_D(t)$
	\item[$III_a$:] $V_{D'}(t) = V_D(t)$
\end{itemize}
\end{prop}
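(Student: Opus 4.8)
The plan is to reduce everything to the behavior of $\Delta_0$ under the four moves, which is already recorded in Proposition \ref{P:alexreid}. The observation I would establish first is that $V_D(t)$ is nothing but a logarithmic $v$-derivative of the diagram-level Alexander polynomial: writing $\Delta_0(D)(u,v) = (1-uv)\Delta'_0(D)(u,v)$, I claim
$$V_D(t) = -\left.v\frac{\partial}{\partial v}\Delta'_0(D)(u,v)\right|_{u=t,\ v=t^{-1}}.$$
To verify this I would use the expansion $\Delta'_0(D)(u,v) = g_0(u,v) + f_1(u,v) + (1-uv)f_2(u,v) + \sum_{k\geq 3}(1-uv)^{k-1}f_k(u,v)$, where $g_0=(uv)^{-Wr(D)}\sum_{i=0}^{Wr(D)-1}(uv)^i$ is the factor satisfying $f_0=(1-uv)g_0$, and then apply $-v\frac{\partial}{\partial v}$ and set $uv=1$ term by term, using the explicit formulas of Proposition \ref{P:alexdet1}.

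The term-by-term check is routine but must be done carefully. Applying $-v\frac{\partial}{\partial v}$ to $g_0$ and setting $uv=1$ turns the geometric sum into $\sum j = \frac{Wr(D)(Wr(D)+1)}{2}$ (I would confirm this also holds when $Wr(D)\leq 0$, the one genuinely fiddly sign check). Applying it to $f_1=\sum_c -\e(c)(uv)^{-(1+\e(c))/2}u^{-LU(c)}v^{LO(c)}$ brings down the $v$-exponent $LO(c)-\tfrac{1+\e(c)}{2}$ and, after $uv=1$ collapses the power of $t$ to $Ind(c)=-LO(c)-LU(c)$, reproduces exactly the single-crossing sum in $V_D$. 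For the $(1-uv)f_2$ term the product rule gives $uv\,f_2 - v(1-uv)\frac{\partial}{\partial v}f_2$, whose value at $uv=1$ is simply $f_2(t,t^{-1})$; and since $f_2(t,t^{-1})=\sum_{S_1}\e_i\e_j t^{Ind(c_i)+Ind(c_j)} - \sum_{S_2}\e_i\e_j t^{Ind(c_i)+Ind(c_j)}$ (the $(uv)$-powers and the $RO,RU$ contributions cancel against $LO,LU$ via $Ind=RO+RU=-LO-LU$), this recovers the two pair-sums. Finally every $(1-uv)^{k-1}f_k$ with $k\geq 3$ still carries a positive power of $(1-uv)$ after differentiation, so it vanishes at $uv=1$; this is what makes the derivative extract precisely the ``second order'' data.

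With the identity in hand the proposition is immediate from Proposition \ref{P:alexreid}. For $I_a$, $II_a$ and $III_a$ we have $\Delta'_0(D')=\Delta'_0(D)$, so the operator $-v\frac{\partial}{\partial v}\big|_{u=t,v=t^{-1}}$ returns the same value and $V_{D'}(t)=V_D(t)$. For $I_b$ we have $\Delta'_0(D')=(uv)^{-1}\Delta'_0(D)$, and the product rule gives
$$-v\frac{\partial}{\partial v}\!\left[(uv)^{-1}\Delta'_0(D)\right] = (uv)^{-1}\Delta'_0(D) - (uv)^{-1}\,v\frac{\partial}{\partial v}\Delta'_0(D).$$
Setting $u=t$, $v=t^{-1}$ makes $(uv)^{-1}=1$, so the right-hand side becomes $\Delta'_0(D)(t,t^{-1}) + V_D(t)$; by Theorem \ref{T:alexwrithe} the first summand is $-W_K(t)$, giving $V_{D'}(t)=V_D(t)-W_K(t)$.

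The main obstacle is really the first step: pinning down the identity $V_D(t)=-v\frac{\partial}{\partial v}\Delta'_0(D)(u,v)\big|_{u=t,v=t^{-1}}$, since it requires the precise forms of $f_0,f_1,f_2$ from Proposition \ref{P:alexdet1}, the reindexing of $g_0$ (with the sign subtlety for negative writhe), and the observation that only the $k\leq 2$ terms survive the evaluation. Once that is secured, the Reidemeister analysis costs almost nothing. As a fallback I would instead verify the four moves by brute force directly from the definition of $V_D$, tracking the changes in $Wr$, in each $Ind(c)$ and $LO(c)$, and in the membership of $S_1,S_2$; there the hard case is $III_a$ (together with the bookkeeping of how the single-crossing sum and the pair-sums shift into one another), which is exactly what the derivative formulation lets us bypass.
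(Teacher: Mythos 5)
Your proof is correct, but it takes a genuinely different route from the paper's. The paper proves this proposition by direct combinatorial bookkeeping on the Gauss diagram: for each move it tracks the change in $Wr(D)$, in the indices and left over-indices of the chords, and in membership of pairs in $S_1$ and $S_2$, with the hardest case being $III_a$, where one must analyze the two possible Gauss-diagram configurations and invoke the identity $Ind(c_1)+Ind(c_2)=Ind(c_3)$ (this direct analysis is exactly your stated ``fallback''). Your route instead pins down the identity $V_D(t) = -v\frac{\partial}{\partial v}\Delta'_0(D)(u,v)\big\vert_{u=t,\,v=t^{-1}}$ and reads the four cases off Proposition \ref{P:alexreid}. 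That identity is in substance the paper's Theorem \ref{T:alexwrithe2} in disguise: since $\Delta'_0(D)(u,v)+W_K(u)=(1-uv)\Phi(D)(u,v)$ and $W_K(u)$ is independent of $v$, applying $-v\frac{\partial}{\partial v}$ and setting $uv=1$ yields precisely $\Phi(D)(t,t^{-1})$, and your term-by-term verification (the $g_0$ and $f_1$ computations, the survival of $f_2(t,t^{-1})$ via the product rule, the vanishing of all $k\geq 3$ terms) is the same calculation the paper performs to prove that theorem; the sign subtlety you flag for $Wr(D)\leq 0$ does check out, since for $Wr(D)<0$ one has $g_0=-\sum_{i=0}^{-Wr(D)-1}(uv)^i$ and the evaluation still produces $\frac{Wr(D)(Wr(D)+1)}{2}$. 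So in effect you prove Theorem \ref{T:alexwrithe2} first and deduce the Reidemeister behavior afterward, reversing the paper's logical order (the paper needs this proposition first, to make $V_K$ well defined before relating it to the Alexander polynomial). There is no circularity: Propositions \ref{P:alexreid} and \ref{P:alexdet1} and the proof of Theorem \ref{T:alexwrithe} are all independent of this proposition, and your use of Theorem \ref{T:alexwrithe} in the $I_b$ case is valid at the diagram level, since its proof establishes $\Delta'_0(D)(t,t^{-1})=-W_K(t)$ for any diagram $D$ (and in any case diagram-level and normalized versions agree at $uv=1$). What your approach buys is that the delicate $III_a$ case becomes automatic, absorbed into Proposition \ref{P:alexreid} (which the paper leaves as a ``straightforward exercise''); what the paper's approach buys is a self-contained combinatorial proof with geometric content, independent of the determinant machinery, keeping the definition of $V_K$ free-standing.
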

\begin{proof}
Moves $I_a$ and $I_b$ each add a chord $c$ to the Gauss diagram for $D$, as shown in Figure \ref{F:gaussreid1}.  In both cases, the writhe of the diagram increases by 1, so the first term of $V_D$ increases by $Wr(D)+1$, as shown:
$$\frac{(Wr(D)+1)(Wr(D)+2)}{2} = \frac{Wr(D)(Wr(D)+1) + 2(Wr(D)+1)}{2} = \frac{Wr(D)(Wr(D)+1)}{2} + (Wr(D) + 1).$$

\begin{figure}[htbp]
\begin{center}
\scalebox{1}{\includegraphics{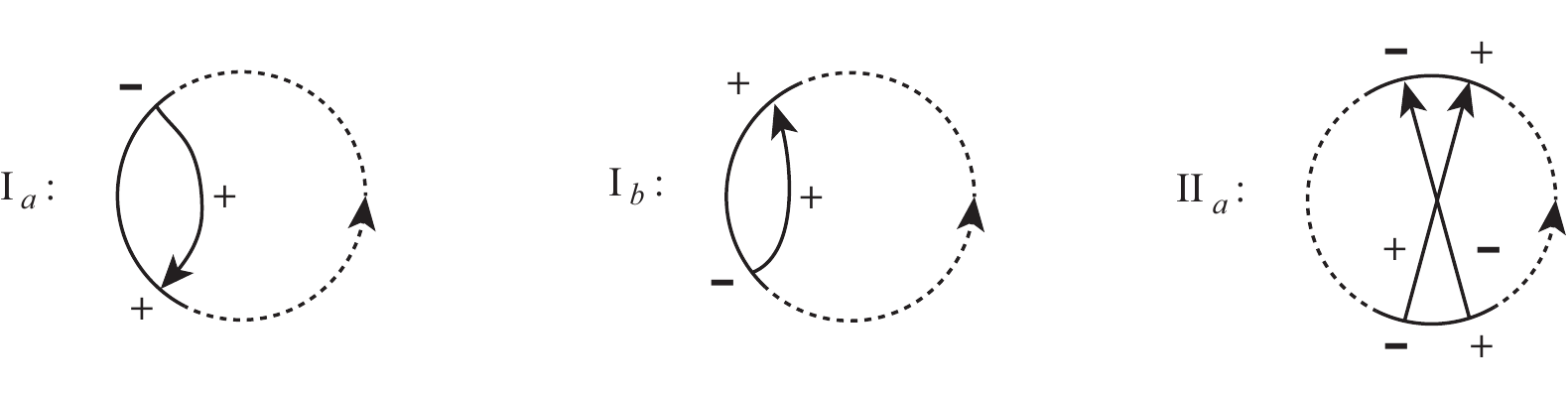}}
\end{center}
\caption{Gauss diagrams for moves $I_a, I_b, II_a$.  No other endpoints are on the solid arcs.}
\label{F:gaussreid1}
\end{figure}

For move $I_a$, $RO(c) = RU(c) = 0$, $LU(c) = Wr(D)$ and $LO(c) = -Wr(D)$, so $Ind(c) = 0$.  So in the first sum in $V_D$ there is a new term $t^0(-Wr(D) - 1) = -(Wr(D)+1)$, which exactly cancels the increase in the first term.  For each other chord $c_i$, both endpoints of chord $c$ are on the same side of $c_i$, so $Ind(c_i)$ doesn't change.  If the pair $\{c, c_i\}$ is in $S_1$, then $c$ is to the left of $c_i$, so $LO(c_i)$ decreases by 1.  So for each such $c_i$, we subtract $\e(c_i)t^{Ind(c_i)}$.  On the other hand, in the second sum, for each $c_i$ with $\{c, c_i\}$  in $S_1$, we add $\e(c_i)t^{Ind(c)+Ind(c_i)} = \e(c_i)t^{Ind(c_i)}$, so these changes also cancel.  Since there are no chords in configuration $B$ with chord $c$, there is no change in the last sum.  So $V_{D'} = V_D$ for move $I_a$.

For move $I_b$, $LO(c) = LU(c) = 0$, so the new term in the first sum is $t^0(0-1) = -1$.  If a chord $c_i$ is parallel to $c$ (i.e. not an alternating configuration), then $c$ is to the left of $c_i$, and $LO(c_i)$ decreases by one, decreasing $V_D$ by $\e(c_i)t^{Ind(c_i)}$.  On the other hand, if the chord $c_i$ {\it is} in an alternating configuration, then $\{c, c_i\}$ is in $S_2$, and we again subtract $\e(c_i)t^{Ind(c_i)}$.  Hence,
$$V_{D'}(t) = V_D(t) + (Wr(D) + 1) - 1 - \sum_{c_i}{\e(c_i)t^{Ind(c_i)}} = V_D(t) - W_K(t)$$

Now we consider move $II_a$.  This move adds two chords $c_1$ and $c_2$ of opposite sign, as shown in Figure \ref{F:gaussreid1} ($c_1$ is the positive chord).  So there is no change to the writhe.  Observe that the two chords have the same index, but $LO(c_2) = LO(c_1)-1$.  So the total contribution to the first sum is
$$t^{Ind(c_1)}(LO(c_1) - 1) - t^{Ind(c_2)}(LO(c_2)) = t^{Ind(c_1)}(LO(c_1) - 1 - (LO(c_1) - 1)) = 0.$$
Finally, since any other chord $c_i$ which alternates with $c_1$ also alternates with $c_2$ in the same configuration, and vice versa, the contributions to the final two sums also cancel.  So $V_{D'} = V_D$ for move $II_a$.

Finally, we look at move $III_a$.  In this case, we are not adding or removing crossings, but rearranging their order along the knot.  Depending on how the remaining arcs of the knot are drawn, there are two possibilities for the Gauss diagram before and after the move, shown in Figure \ref{F:gaussreid2}.  Since no crossings are added or removed, the writhe of the diagram is unchanged.

\begin{figure}[htbp]
\begin{center}
\scalebox{.8}{\includegraphics{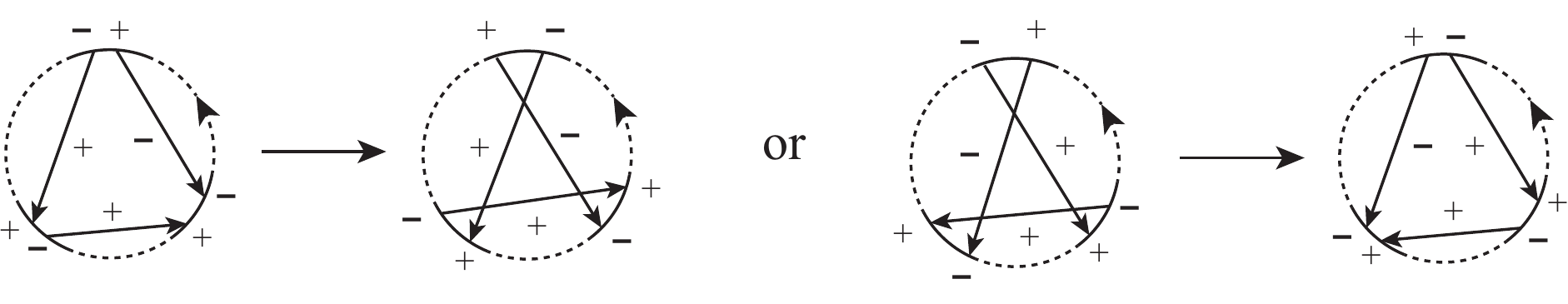}}
\end{center}
\caption{Gauss diagrams for move $III_a$.  No other endpoints are on the solid arcs.}
\label{F:gaussreid2}
\end{figure}

Suppose that $c_1$ and $c_2$ are the positive crossings involved in the $III_a$ move, and $c_3$ is the negative crossing.  The indices of all three crossings are preserved by the move.  $LO(c_1)$ and $LO(c_2)$ are also preserved by the move, but $LO(c_3)$ is reduced by 1.  This adds $(-1)t^{Ind(c_3)}(-1) = t^{Ind(c_3)}$ to the first sum in $V_D$.  The terms for other crossings are unchanged.

Depending on which Gauss diagram in Figure \ref{F:gaussreid2} is considered, a move $III_a$ either removes a configuration of type $A$ or adds a configuration of type $B$ (in both cases involving chords $c_1$ and $c_2$).  In either case, we subtract $t^{Ind(c_1)+Ind(c_2)}$ from $V_D$.  So $V_{D'}(t) = V_D(t) + t^{Ind(c_3)} - t^{Ind(c_1)+Ind(c_2)}$.  But notice that $Ind(c_1) + Ind(c_2) = Ind(c_3)$, so $V_{D'}(t) = V_D(t)$.
\end{proof}

Hence $V_D$ yields a well-defined invariant of the virtual knot $K$, modulo $W_K(t)$; we denote the residue class by $V_K(t)$.  We call $V_K(t)$ the {\em second-order writhe polynomial} for the knot.  Our next theorem shows that this invariant can, like the writhe polynomial, be obtained from the Alexander polynomial.  Consider a virtual knot $K$ with diagram $D$.  The Alexander polynomial for $D$ is $\Delta_0(D)(u,v) = (1-uv)\Delta'_0(D)(u,v)$.  From Theorem \ref{T:alexwrithe}, $W_K(u) = -\Delta'_0(D)(u,u^{-1})$.  Since $\Delta'_0(D)(u,v) + W_K(u) = \Delta'_0(D)(u,v) - \Delta'_0(D)(u,u^{-1})$ is trivial when $v = u^{-1}$ (i.e. when $uv = 1$), it has a factor of $1-uv$.  Define $\Phi(D)(u,v)$ by $\Delta'_0(D)(u,v) + W_K(u) = (1-uv)\Phi(D)(u,v)$.

\begin{thm} \label{T:alexwrithe2}
Let $K$ be a virtual knot with diagram $D$.  Then $V_D(t) = \Phi(D)(t, t^{-1})$.
\end{thm}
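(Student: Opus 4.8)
\emph{Proof proposal.} The plan is to read $\Phi(D)(t,t^{-1})$ directly off the $(1-uv)$-adic expansion $\Delta_0(D)(u,v)=\sum_{k=0}^{n}(1-uv)^k f_k(u,v)$ of Proposition~\ref{P:alexdet1}, and to match each surviving piece against one of the four terms of $V_D(t)$. The key observation is that, by its very definition,
$$\Phi(D)(u,v)=\frac{\Delta'_0(D)(u,v)-\Delta'_0(D)(u,u^{-1})}{1-uv}$$
is a difference quotient in the single variable $w:=uv$, and evaluating at $(u,v)=(t,t^{-1})$ forces $w=1$. Thus $\Phi(D)(t,t^{-1})$ is that difference quotient evaluated at $w=1$, which turns the division by $(1-uv)$ into a derivative in $w$ at $w=1$.

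First I would record, exactly as in the proof of Theorem~\ref{T:alexwrithe}, that Proposition~\ref{P:alexdet1}(1) gives $f_0(u,v)=(uv)^{-Wr(D)}-1=(1-uv)\,G(w)$, where $G(w)=\sum_{i=0}^{Wr(D)-1}w^{\,i-Wr(D)}$ is a Laurent polynomial in $w$ alone, so that
$$\Delta'_0(D)(u,v)=G(w)+f_1(u,v)+(1-uv)f_2(u,v)+(1-uv)^2 f_3(u,v)+\cdots.$$
To expose the $w$-dependence of every monomial I would substitute $v=w/u$, under which $u^a v^b=u^{\,a-b}w^{\,b}$. Each of $f_3,f_4,\dots$ carries a positive power of $(1-uv)=(1-w)$ and so contributes $0$ at $w=1$; the term $(1-uv)f_2$ contributes $f_2$ itself evaluated at $w=1$, namely $f_2(t,t^{-1})$; and the two $w$-constant pieces $G(w)$ and $f_1$ survive only through their first-order behaviour, i.e. through $-\partial_w$ at $w=1$.

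Then I would evaluate the three surviving contributions and check that they reproduce $V_D(t)$ term by term. The piece $G$ contributes $-G'(1)=\tfrac{Wr(D)(Wr(D)+1)}{2}$ (the same geometric-series computation works uniformly for $Wr(D)$ of either sign, and $G(1)=Wr(D)$ recovers Theorem~\ref{T:alexwrithe}), the first term of $V_D(t)$. For $f_1$, Proposition~\ref{P:alexdet1}(2) together with $v=w/u$ and $Ind(c)=-LO(c)-LU(c)$ rewrites each summand as $-\e(c)\,u^{Ind(c)}\,w^{\,LO(c)-(1+\e(c))/2}$; applying $-\partial_w$ at $w=1$ and setting $u=t$ gives $\sum_c \e(c)\,t^{Ind(c)}\!\left(LO(c)-\tfrac{1+\e(c)}{2}\right)$, the second term. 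For $f_2$, I would simply set $w=1$ (i.e. $v=u^{-1}$) in Proposition~\ref{P:alexdet1}(3); the identities $Ind(c)=RO(c)+RU(c)=-LO(c)-LU(c)$ collapse each $u,v$-exponent to $u^{Ind(c_i)+Ind(c_j)}$, producing $\sum_{\{c_i,c_j\}\in S_1}\e(c_i)\e(c_j)\,t^{Ind(c_i)+Ind(c_j)}-\sum_{\{c_i,c_j\}\in S_2}\e(c_i)\e(c_j)\,t^{Ind(c_i)+Ind(c_j)}$, the last two terms. Summing the three contributions yields $\Phi(D)(t,t^{-1})=V_D(t)$.

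The main obstacle is the bookkeeping for the two constant pieces $G$ and $f_1$: neither vanishes at $uv=1$, so one cannot simply substitute $uv=1$ into $\Delta'_0$, and the division by $(1-uv)$ must be treated as a $w$-derivative. Pinning down the signs and shifts produced by $-\partial_w|_{w=1}$—in particular the shift $LO(c)-\tfrac{1+\e(c)}{2}$ arising from the exponent $LO(c)-(1+\e(c))/2$ of $w$, and the $\tfrac{Wr(D)(Wr(D)+1)}{2}$ arising from $-G'(1)$—is the delicate part. By contrast the vanishing of $f_{\ge 3}$ is immediate, and the $f_2$ contribution is a routine substitution once the index identities are invoked.
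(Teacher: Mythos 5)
Your proposal is correct and follows essentially the same route as the paper: both start from the expansion $\Delta_0(D)=\sum_k (1-uv)^k f_k$ of Proposition~\ref{P:alexdet1}, absorb $W_K(u)=-\Delta'_0(D)(u,u^{-1})$ into the $f_0$ and $f_1$ pieces, note that the $f_k$ with $k\ge 3$ die at $uv=1$ while $f_2(t,t^{-1})$ collapses via $Ind(c)=RO(c)+RU(c)=-LO(c)-LU(c)$ to the two configuration sums, and match the three surviving contributions term by term with $V_D(t)$. Your only departure is in how the division by $(1-uv)$ is executed: where the paper explicitly factors $(1-uv)$ out of the two pieces by geometric-series manipulations and then counts terms at $uv=1$ (yielding $\frac{Wr(D)(Wr(D)+1)}{2}$ and $LO(c)-\frac{1+\e(c)}{2}$), you evaluate the difference quotient as $-\partial_w\vert_{w=1}$ in $w=uv$ --- an equivalent computation, and arguably a slightly cleaner one, since the derivative handles negative $Wr(D)$ and negative exponents $LO(c)-\frac{1+\e(c)}{2}$ uniformly, whereas the paper's written sums tacitly presume nonnegative upper limits.
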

\begin{proof}
Recall from the proof of Theorem \ref{T:alexwrithe} that
$$\Delta'_0(D)(u,v) = (uv)^{-Wr(D)}\left(\sum_{i=0}^{Wr(D)-1}{(uv)^i}\right) + f_1(u,v) + (1-uv)f_2(u,v) + \cdots + (1-uv)^{n-1}f_n(u,v)$$
When we add $W_K(u)$, we get
\begin{align*}
\Delta'_0(D)(u,v) + W_K(u) = &\left((uv)^{-Wr(D)}\left(\sum_{i=0}^{Wr(D)-1}{(uv)^i}\right) - Wr(D)\right) + \left(f_1(u,v) + \sum_c{\e(c)u^{Ind(c)}}\right) \\
&+ (1-uv)f_2(u,v) + \cdots + (1-uv)^{n-1}f_n(u,v)
\end{align*}
Looking at the first term, we find:
\begin{align*}
(uv)^{-Wr(D)}\left(\sum_{i=0}^{Wr(D)-1}{(uv)^i}\right) &- Wr(D) = (uv)^{-Wr(D)}\sum_{i=0}^{Wr(D)-1}{\left((uv)^{i} - (uv)^{Wr(D)}\right)} \\
&= (uv)^{-Wr(D)}(1-uv)\sum_{i=0}^{Wr(D)-1}{(uv)^i\left(1 + uv + \cdots + (uv)^{Wr(D)-i-1}\right)} \\
&= (uv)^{-Wr(D)}(1-uv)\sum_{i=0}^{Wr(D)-1}\sum_{j=0}^{Wr(D)-i-1}{(uv)^{i+j}}
\end{align*}
In the second term, we use Proposition \ref{P:alexdet1} to find:
\begin{align*}
f_1(u,v) + \sum_c{\e(c)u^{Ind(c)}} &= \sum_c{-\e(c)(uv)^{-(1+\e(c))/2}u^{-LU(c)}v^{LO(c)}} + \sum_c{\e(c)u^{Ind(c)}} \\
&= \sum_c{\e(c)u^{Ind(c)}\left(1-(uv)^{-(1+\e(c))/2}u^{LO(c)}v^{LO(c)}\right)} \\
&= \sum_c{\e(c)u^{Ind(c)}\left(1-(uv)^{LO(c)-(1+\e(c))/2}\right)} \\
&= (1-uv)\sum_c{\e(c)u^{Ind(c)}\sum_{i=0}^{LO(c)-(1+\e(c))/2 - 1}{(uv)^i}} 
\end{align*}
So then $\Phi(D)(u,v)$ is
\begin{align*}
\Phi(D)(u,v) = &(uv)^{-Wr(D)}\sum_{i=0}^{Wr(D)-1}\sum_{j=0}^{Wr(D)-i-1}{(uv)^{i+j}} + \sum_c{\e(c)u^{Ind(c)}\sum_{i=0}^{LO(c)-(1+\e(c))/2 - 1}{(uv)^i}} \\
&+ f_2(u,v) + (1-uv)f_3(u,v) + \cdots + (1-uv)^{n-2}f_n(u,v)
\end{align*}
Now we set $t = u = v^{-1}$ and use Proposition \ref{P:alexdet1} to obtain
\begin{align*}
\Phi(D)(t, t^{-1}) &= \sum_{i=0}^{Wr(D)-1}\sum_{j=0}^{Wr(D)-i-1}{1} + \left(\sum_c{\e(c)t^{Ind(c)}\sum_{i=0}^{LO(c)-(1+\e(c))/2 - 1}{1}}\right) + f_2(t, t^{-1}) \\
&= \sum_{i=0}^{Wr(D)-1}{(Wr(D)-i)} + \sum_c{\e(c)t^{Ind(c)}\left[LO(c)-\left(\frac{1+\e(c)}{2}\right)\right]} \\
&\qquad + \sum_{\{c_1, c_2\}\in S_1}{\e(c_i)\e(c_2)t^{-LU(c_1) + RU(c_2) + \e_2}t^{-LO(c_1) + RO(c_2) - \e_2}} \\
&\qquad - \sum_{\{c_1, c_2\}\in S_2}{\e(c_i)\e(c_2)t^{-LU(c_1) - LU(c_2)}t^{-LO(c_1) - LO(c_2)}} \\
&= \frac{Wr(D)(Wr(D)+1)}{2} + \sum_c{\e(c)t^{Ind(c)}\left[LO(c)-\left(\frac{1+\e(c)}{2}\right)\right]} \\
&\qquad + \sum_{\{c_1, c_2\}\in S_1}{\e(c_i)\e(c_2)t^{Ind(c_1)+Ind(c_2)}} - \sum_{\{c_1, c_2\}\in S_2}{\e(c_i)\e(c_2)t^{Ind(c_1)+Ind(c_2)}} \\
&= V_D(t)
\end{align*}
\end{proof}

We could use the Alexander polynomial to derive higher-order writhe invariants (each defined modulo the greatest common divisor of the lower-order invariants), but the formulas will quickly become unwieldy.  In the Appendix, we have listed $W_K(t)$ and $V_K(t)$ for all virtual knots with at most 4 real crossings.

\section{Applications} \label{S:applications}

\subsection{Virtual crossing number} \label{SS:vc}

The {\em virtual crossing number} of a virtual knot $K$, denoted $vc(K)$, is the minimum, over all diagrams of $K$, of the number of virtual crossings in the diagram.  If $K$ is classical, then $vc(K) = 0$.  In this section we will prove that the breadth of the writhe polynomial is a lower bound for the virtual crossing number.

Boden et. al. \cite{bo} defined a {\em virtual Alexander polynomial} $H_K(s, t, q)$ for a virtual knot $K$, derived from a {\em virtual knot group}.  Their version of the virtual knot group included relations at virtual crossings (involving the variable $q$), and they showed:

\begin{lem} \label{L:qwidth} \cite[Theorem 3.4]{bo}
Let $K$ be a virtual knot with virtual Alexander polynomial $H_K(s,t,q)$.  Then
$$q{\rm -width\ of\ }H_K(s,t,q) \leq 2vc(K)$$
where the $q$-width is the difference between the largest and smallest powers of $q$ in the polynomial.
\end{lem}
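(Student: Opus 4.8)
The plan is to work from a single, optimal diagram and then invoke invariance. Since $H_K(s,t,q)$ is defined from the virtual knot group of a diagram $D$ via a Wirtinger-type presentation and Fox calculus, and is a genuine invariant of $K$ (independent of $D$ up to multiplication by a unit $\pm s^a t^b q^c$), its $q$-width does not depend on the diagram; multiplication by a monomial unit shifts all $q$-powers uniformly and so leaves the width unchanged. It therefore suffices to fix one diagram $D_0$ realizing the minimum, with exactly $vc(K)$ virtual crossings, and to bound the $q$-width of $H_K$ computed from $D_0$ by $2\,vc(K)$. First I would recall the presentation explicitly: label the arcs of $D_0$ (the segments between consecutive crossings of either type), record the relation contributed by each classical crossing and each virtual crossing, and assemble the Alexander matrix $A$ of Fox derivatives after abelianizing to $\mathbb{Z}[s^{\pm 1}, t^{\pm 1}, q^{\pm 1}]$. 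After deleting one redundant generator and one relation to form a square matrix $A'$, the invariant $H_K$ is, up to a unit, a generator of the relevant order ideal, hence a divisor of $\det A'$.

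The crucial structural point is that the grading variable $q$ enters \emph{only} through the virtual crossings: every classical-crossing relation is free of $q$, while each virtual crossing contributes factors of $q^{\pm 1}$. Because $q$-width is additive under products — the coefficient ring $\mathbb{Z}[s^{\pm 1}, t^{\pm 1}]$ is an integral domain, so the top and bottom $q$-coefficients of a product cannot cancel — a divisor has $q$-width no larger than its multiple, and hence $q\text{-width}(H_K) \le q\text{-width}(\det A')$. It then remains only to bound the width of the determinant. I would do this by a height (grading) argument: assign to each arc an integer $q$-height equal to the net signed number of virtual crossings traversed from a fixed basepoint, observe that the classical relations are homogeneous for this grading while each virtual relation shifts it by $\pm 1$, and use that traversing the whole knot returns to the basepoint to see that the heights range over an interval of length at most $vc(K)$.

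To finish, I would track $q$-degrees through the expansion $\det A' = \sum_\sigma (-1)^\sigma \prod_i A'_{i,\sigma(i)}$. The goal is to show that the relations can be organized so that each of the $vc(K)$ virtual crossings contributes $q$-degree lying in $\{-1, 0, +1\}$ to any surviving term, the classical rows contributing $0$; then the total $q$-degree of every nonzero term lies in $[-vc(K),\, vc(K)]$, so the largest power of $q$ occurring is at most $vc(K)$ and the smallest at least $-vc(K)$, giving $q\text{-width} \le 2\,vc(K)$. I expect the main obstacle to be exactly this final bookkeeping. One must verify carefully that the reduction to the square matrix $A'$ and the passage to the order-ideal generator $H_K$ do not inflate the $q$-width, and — more delicately — that each virtual crossing really nets at most a single $q^{\pm 1}$ in any nonzero determinant term rather than one per outgoing strand. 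It is precisely this that produces the clean factor of $2$ (one unit for the top power, one for the bottom), and it is where the local geometry of the virtual crossings must genuinely be used rather than merely estimated by a crude count of $q$-occurrences in the matrix.
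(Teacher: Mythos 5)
First, a point of orientation: the paper does not prove this lemma at all---it is imported verbatim from \cite[Theorem 3.4]{bo}---so your proposal must be measured against the argument in that source, whose route you have in fact correctly guessed in outline (reduce to a minimal diagram by invariance of the $q$-width under unit normalization, expand the determinant of the virtual Alexander matrix, observe that $q$ enters only through virtual crossings, and bound the $q$-degree of each term). Your supporting observations are also sound: multiplication by $\pm s^a t^b q^c$ shifts all $q$-powers uniformly, and since $\Z[s^{\pm 1}, t^{\pm 1}]$ is a domain the $q$-width is additive under products, so passing to a divisor (the order-ideal generator) cannot inflate the width.

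The genuine gap is that your proposal reduces the theorem to exactly the claim you defer---that each virtual crossing nets at most a single $q^{\pm 1}$ in any nonzero determinant term---and the height-grading argument you offer as scaffolding does not deliver it. The grading fails at classical crossings: the two strands meeting there may have traversed different signed numbers of virtual crossings, so the arcs entering a classical relation generally have different heights, the relation is \emph{not} homogeneous for your grading, and the ``interval of length at most $vc(K)$'' bound on heights therefore gives no control over the $q$-degrees of determinant terms. What closes the gap is a direct inspection of the virtual relations in the group of \cite{bo}: at a virtual crossing with incoming arcs $a, b$ and outgoing arcs $a', b'$, the relations conjugate by $v$ on one strand and by $v^{-1}$ on the other ($a' = v a v^{-1}$, $b' = v^{-1} b v$, with $v \mapsto q$ under abelianization). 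Each virtual crossing thus contributes two rows, one whose nonzero entries have $q$-degrees in $\{0, +1\}$ (entries $-1$ and $q$) and one whose entries have degrees in $\{0, -1\}$ (entries $-1$ and $q^{-1}$); since a determinant term selects one entry per row, the net contribution of each virtual crossing lies in $\{-1, 0, +1\}$, every term has total $q$-degree in $[-vc(K), vc(K)]$, and the width bound $2vc(K)$ follows, with classical rows contributing degree $0$. Note that it is precisely the opposite-sign conjugations on the two strands---the same asymmetry reflected in the substitution $H_K(s,t,q) = \Delta_0(K)(sq^{-1}, tq)$ of Lemma \ref{L:virtgen}---that produces the one-per-crossing count; your instinct that ``the local geometry of the virtual crossings must genuinely be used'' was exactly right, but without writing these relations down the proposal is an accurate plan rather than a proof. (A minor further inaccuracy: the extra generator $v$ contributes a column of its own to the Fox matrix, so the bookkeeping is not quite the ``delete one generator and one relation'' setup of the classical Alexander polynomial, though this column is selected at most once per term and does not disturb the bound.)
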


They also determined the relationship between the virtual Alexander polynomial and the polynomial $\Delta_0(K)$ (which they called the {\em generalized Alexander polynomial}):

\begin{lem} \label{L:virtgen} \cite{bo}
For any virtual knot $K$, up to normalization by multiplication by $\pm s^a t^b q^c$,
$$H_K(s,t,q) = H_K(sq^{-1}, tq, 1) = \Delta_0(K)(sq^{-1}, tq)$$
\end{lem}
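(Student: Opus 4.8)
The plan is to realize both polynomials as determinants of the same local data --- crossing blocks assembled against the arc permutation --- and then relate them by a diagonal gauge transformation that tracks virtual crossings. Following the construction of $\Delta_0$ in Section \ref{S:alexander}, but now cutting the diagram into arcs at \emph{every} crossing (classical \emph{and} virtual), I would build a square matrix $N(s,t,q) - Q$ presenting the Alexander module of Boden et al.'s virtual knot group \cite{bo}: at each classical crossing place the block $M_+$ or $M_-$ from Figure \ref{F:alexander} (with $u,v$ now the group's over/under variables $s,t$); at each virtual crossing place the ``swap-by-$q$'' block $\left(\begin{smallmatrix} 0 & q \\ q^{-1} & 0 \end{smallmatrix}\right)$ recording the virtual relation; and let $Q$ be the permutation matrix of the arc cycle. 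Then $H_K(s,t,q) = \det(N - Q)$ up to a unit, exactly as $\Delta_0(D) = \det(M-P)$.

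The second equality is then almost immediate. Setting $q=1$ turns every virtual block into the pure transposition $\left(\begin{smallmatrix} 0 & 1 \\ 1 & 0 \end{smallmatrix}\right)$, so the virtual crossings become transparent: the two arcs meeting at a virtual crossing can be spliced together and the transposition absorbed into the permutation. What remains is precisely the matrix $M - P$ of the $\Delta_0$ construction, whose arcs run between classical crossings only. Hence $H_K(s,t,1) = \Delta_0(K)(s,t)$ up to normalization, and substituting $s \mapsto sq^{-1}$, $t \mapsto tq$ yields $H_K(sq^{-1}, tq, 1) = \Delta_0(K)(sq^{-1}, tq)$.

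For the first equality I would gauge away the explicit $q$ by a diagonal rescaling. Fix a basepoint and assign to each arc $a_i$ an integer $n_i$ recording the signed number of virtual crossings traversed from the basepoint up to $a_i$. Multiplying the rows and columns of $N - Q$ by the corresponding powers of $q$ cancels the $q^{\pm 1}$ entries in every virtual block --- reducing it to a bare transposition --- while depositing the accumulated powers of $q$ onto the classical blocks, where they merge with $s$ and $t$. Tracking the conventions shows that the over- and under-crossing variables become multiplied by $q^{-1}$ and $q$ respectively, so the rescaled matrix is precisely $N - Q$ at $q = 1$ with $s, t$ replaced by $sq^{-1}, tq$. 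Since a diagonal rescaling alters a determinant only by a monomial in $q$, this gives $H_K(s,t,q) = H_K(sq^{-1}, tq, 1)$ up to the allowed normalization by $\pm s^a t^b q^c$.

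The main obstacle is the bookkeeping in this gauge step. One must check that the exponents $n_i$ are globally consistent --- that they close up correctly as one returns to the basepoint, so the rescaling is well defined against all the relations --- and that at \emph{each} classical crossing, for both signs and against the explicit forms of $M_+$ and $M_-$, the net $q$-power deposited is exactly $q^{-1}$ on the over-strand entry and $q^{+1}$ on the under-strand entry. A secondary subtlety is the elimination of the virtual arcs when passing from $N - Q$ to $M - P$: one must verify that splicing across a transposition block genuinely recovers the classical-only arc structure of Section \ref{S:alexander} without changing the determinant beyond a unit. Once these local verifications are in place, both equalities follow, with all remaining discrepancies absorbed into the normalization $\pm s^a t^b q^c$.
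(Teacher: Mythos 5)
First, a framing note: the paper does not prove this lemma at all --- it is quoted directly from \cite{bo} --- so the only comparison available is with that source, whose argument runs at the level of group presentations (normalizing the arc generators of the virtual knot group by conjugation with powers of the virtual generator, so that the virtual relations become trivial and the classical relations absorb the powers of $q$). Your matrix-level ``gauge'' is the natural determinant-theoretic counterpart of that argument, and the outline is sound; but as written it contains a genuine gap, precisely at the step you dismiss as bookkeeping.

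The gap is that your prescribed weights --- $n_i$ equal to the signed number of \emph{virtual} passages from a basepoint --- provably cannot deposit the required powers at the classical crossings. Scaling row $i$ by $q^{\rho_i}$ and column $j$ by $q^{-\gamma_j}$ multiplies $b_{ij}$ by $q^{\rho_i - \gamma_j}$, and keeping the permutation entries equal to $-1$ forces $\rho_i = \gamma_{\pi(i)}$. If $\gamma$ changes only at virtual passages, then at a classical crossing $\gamma_{\pi(2i-1)} = \gamma_{2i-1}$, so the diagonal entries $u^{\pm 1}, v^{\pm 1}$ are multiplied by $q^{0}$ --- nothing is deposited --- while the entries $1-(uv)^{\mp 1}$, which must remain \emph{fixed} under $(s,t)\mapsto(sq^{-1},tq)$, pick up uncontrolled powers. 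Realizing the substitution in fact forces the weights to jump by $\pm 1$ along the two strands at every \emph{classical} crossing as well, together with an extra compatibility condition $\gamma_{2i} = \gamma_{2i-1}+1$ at each classical crossing (in the conventions of Figure \ref{F:alexander}). This makes the system overdetermined: the jumps already determine every $\gamma_j$ from one basepoint value, and the per-crossing conditions are additional constraints. They fail for arbitrary abstract data: for the virtual trefoil with passage order O1, O2, U1, V, U2, V (virtual crossing joining the wrong pair of segments), the two classical crossings force contradictory values $j=+1$ and $j=-1$ for the virtual jump, whereas the true planar placement (order O1, V, O2, U1, V, U2) yields $j=+1$ consistently at both. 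So the ``global consistency'' you flag is not routine verification --- it is the mathematical heart of the lemma, encoding planar realizability of the diagram with its actual virtual crossings, and it needs a proof, e.g.\ via an intrinsic basepoint-free definition of the weights or via the group-level normalization of \cite{bo}. Two smaller repairs: the antidiagonal block $\left(\begin{smallmatrix} 0 & q \\ q^{-1} & 0 \end{smallmatrix}\right)$ mis-models a virtual crossing against the arc-cycle permutation $Q$, since strands pass straight through and the $q^{\pm1}$ coefficients attach to each strand's own continuation (the swap is already inside $Q$); this is fixable by reordering rows within the block at the cost of a sign. And the identity $H_K \doteq \det(N-Q)$ is itself a claim requiring justification from the Fox-calculus definition in \cite{bo}, whose presentation carries an extra generator beyond the arcs.
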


Both $H_K(s,t,q)$ and $\Delta_0(K)(sq^{-1}, tq)$ are divisible by $1-st$.  Let $H_K'(s,t,q)$ and $\Delta_0'(K)(sq^{-1}, tq)$ denote the quotients.  Note that the $q$-width of $H_K'$ is the same as the $q$-width of $H_K$.  So then
\begin{align*}
q{\rm -width}(H_K(s,t,q)) &= q{\rm -width}(H_K'(s,t,q)) \\
&\geq q{\rm -width}(H_K'(1,1,q)) \\
&= q{\rm -width}(\Delta_0'(K)(q^{-1}, q)) {\rm \ by\ Lemma\ \ref{L:virtgen}}\\
&= q{\rm -width}(W_K(q)) {\rm \ by\ Theorem\ \ref{T:alexwrithe}.}
\end{align*}

Combining this with Lemma \ref{L:qwidth}, we have shown

\begin{thm} \label{T:vcwrithe}
If $K$ is a virtual knot, then
$$width(W_K(t)) \leq 2vc(K)$$
where $width(W_K(t))$ is the difference between the largest and smallest powers of $t$ in the polynomial.
\end{thm}

\subsection{Forbidden number} \label{SS:forbidden}

There are two additional Reidemeister-like moves for virtual knots, known as the \emph{forbidden moves}, illustrated in Figure ~\ref{F:Fmoves}. Move $FO$ moves a strand of the diagram ``over'' a virtual crossing, while move $FU$ moves a strand ``under'' a virtual crossing. 

\begin{figure}[htbp]
\begin{center}
\includegraphics{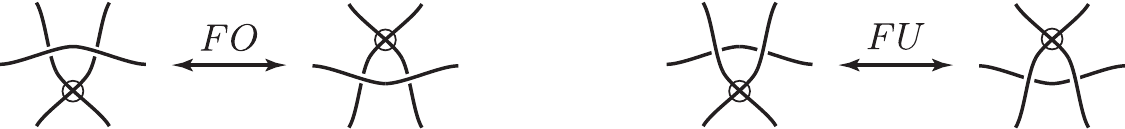}
\end{center}
\caption{Forbidden moves.}
\label{F:Fmoves}
\end{figure}

Unlike the other virtual Reidemeister moves, the forbidden moves {\em do} change the Gauss diagram of a virtual knot.  The move $FO$ has the effect of switching the tails of two chords $c_1$ and $c_2$ in a Gauss diagram, while the move $FU$ switches the heads, as shown in Figure ~\ref{F:FGauss}.  Nelson \cite{ne} notes that we can move a head past a tail by using one forbidden move of each type.

\begin{figure}[htbp]
\begin{center}
{\includegraphics{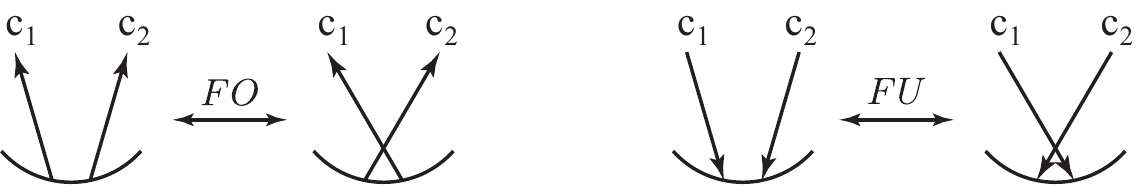}}
\end{center}
\caption{Forbidden moves on Gauss diagrams.}
\label{F:FGauss}
\end{figure}

Taken together, the forbidden moves are an unknotting operation \cite{gpv, kan, ne}.  The {\em forbidden number} of a virtual knot is the minimum number of forbidden moves required to transform the knot into the unknot (with the minimum taken over all diagrams of the knot).  Crans, Ganzell and the author \cite{cgm} found an upper bound for the forbidden number based on the crossing number, and a lower bound based on Cheng's odd writhe polynomial \cite{ch}.  Sakurai \cite{sak} found another (usually stronger) lower bound using Henrich's polynomial \cite{he}.  Since both the odd writhe polynomial and Henrich's polynomial are derived from the writhe polynomial $W_K(t)$, we can find a lower bound as strong (or stronger) as either of these using the writhe polynomial.

From Figure \ref{F:FGauss}, it is clear that the $FO$ and $FU$ moves subtract $\e(c_2)$ from $Ind(c_1)$ and add $\e(c_1)$ to $Ind(c_2)$ (going from the diagram on the left to the one on the right).  The indices of all other chords are unchanged.  So if $K$ is a virtual knot, and $K'$ is the result of applying a forbidden move to crossings $c_1$ and $c_2$, then
\begin{align*}
W_K(t) - W_{K'}(t) &= \pm[\e(c_1)(t^{Ind(c_1)-\e(c_2)} - t^{Ind(c_1)}) + \e(c_2)(t^{Ind(c_2)+\e(c_1)} - t^{Ind(c_2)})] \\
&= \pm[\e(c_1)t^{Ind(c_1)}(t^{-\e(c_2)} - 1) + \e(c_2)t^{Ind(c_2)}(t^{\e(c_1)} - 1)]\\
&= \pm t^{Ind(c_1)}(t^{\pm 1} - 1) \pm t^{Ind(c_2)}(t^{\pm 1} - 1) \\
&= \pm (t-1) \cdot \left\{ \begin{matrix} t^{Ind(c_1)} \pm t^{Ind(c_2)} \\ t^{Ind(c_1)-1} \pm t^{Ind(c_2)} \\ t^{Ind(c_1)} \pm t^{Ind(c_2)-1} \\ t^{Ind(c_1)-1} \pm t^{Ind(c_2)-1} \end{matrix} \right.
\end{align*}
If we define $W'_K(t)$ by $W_K(t) = (t-1)W'_K(t)$, then we have shown:

\begin{thm} \label{T:Wforbidden}
Suppose $K$ is a virtual knot, and $W'_K(t) = \sum{b_i t^i}$.  If $K'$ is the result of performing a forbidden move, and $W'_{K'}(t) = \sum{c_i t^i}$, then $\left\vert \left(\sum{\vert b_i \vert} - \sum{\vert c_i \vert} \right)\right\vert \leq 2$.  In particular, the forbidden number of $K$ is bounded below by $\frac{1}{2}\sum{\vert b_i \vert}$.
\end{thm}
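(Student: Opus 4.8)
The plan is to read off everything from the difference $W_K(t) - W_{K'}(t)$ computed in the display immediately preceding the theorem. Each of the four listed possibilities is $\pm(t-1)$ times a Laurent polynomial with at most two terms, each term having coefficient $\pm1$. Since $W_K(t)$ and $W_{K'}(t)$ are both divisible by $(t-1)$ (by Corollary \ref{C:factor}, as $(1-t)(1-t^{-1})$ divides each), the factor $(t-1)$ cancels, leaving
$$W'_K(t) - W'_{K'}(t) = \pm t^a \pm t^b$$
for integers $a,b$ determined by the relevant case. Thus $W'_K - W'_{K'}$ has at most two nonzero coefficients, each equal to $\pm1$; writing $\lVert p \rVert$ for the sum of the absolute values of the coefficients of a Laurent polynomial $p$, this says $\lVert W'_K - W'_{K'}\rVert \le 2$. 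This bound persists in the degenerate case $a = b$, where the two monomials either reinforce to $\pm 2t^a$ (norm $2$) or cancel (norm $0$).

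Next I would invoke the reverse triangle inequality for $\lVert \cdot \rVert$, namely $\bigl|\lVert f\rVert - \lVert g\rVert\bigr| \le \lVert f - g\rVert$ for any two Laurent polynomials $f, g$. Taking $f = W'_K = \sum b_i t^i$ and $g = W'_{K'} = \sum c_i t^i$ and feeding in the bound from the previous step gives $\bigl|\sum \lvert b_i\rvert - \sum\lvert c_i\rvert\bigr| \le 2$, which is the first assertion.

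For the bound on the forbidden number, suppose $K$ is unknotted by a sequence of $N$ forbidden moves $K = K_0, K_1, \dots, K_N = O$, where $O$ denotes the unknot. Since $O$ is classical, $W_O(t) = 0$, so $W'_O(t) = 0$ and $\lVert W'_O\rVert = 0$. Setting $a_j = \lVert W'_{K_j}\rVert$, the first part gives $\lvert a_j - a_{j+1}\rvert \le 2$ for each $j$, so a telescoping estimate yields $\lVert W'_K\rVert = \lvert a_0 - a_N\rvert \le \sum_{j=0}^{N-1} \lvert a_j - a_{j+1}\rvert \le 2N$. Hence $N \ge \tfrac12 \lVert W'_K\rVert = \tfrac12 \sum \lvert b_i\rvert$; since this holds for every unknotting sequence, it holds for the forbidden number.

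All the substantive work was done in the preceding display, so I anticipate no real obstacle here. The only points deserving care are verifying that cancelling $(t-1)$ is legitimate (which is exactly Corollary \ref{C:factor} applied to both $K$ and $K'$) and checking that the two-monomial estimate on $\lVert W'_K - W'_{K'}\rVert$ survives the case of coinciding exponents, as noted above.
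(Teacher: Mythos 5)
Your proposal is correct and takes essentially the same route as the paper: the paper's proof of this theorem \emph{is} the displayed computation $W_K(t)-W_{K'}(t)=\pm(t-1)\left(\pm t^a \pm t^b\right)$ immediately preceding the statement, from which the coefficient bound and the forbidden-number bound are asserted directly. You have simply made explicit the details the paper leaves implicit --- divisibility by $(t-1)$ via Corollary \ref{C:factor}, the reverse triangle inequality for the $\ell^1$ coefficient norm (including the coinciding-exponent case $a=b$), and the telescoping estimate down to the unknot, which has $W'_O(t)=0$ --- all of which are handled correctly.
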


The lower bound on the forbidden number given by $W_K$ is at least as good as those derived from Henrich's polynomial or the odd writhe polynomial, and is sometimes significantly better.  For example, consider the virtual knot $K$ shown (with its Gauss diagram) in Figure \ref{F:Fexample}.  $W_K(t) = t^{-4} - 2t^{-2} + 2t^2 - t^4 = (t-1)(1 - t^{-4} - t^{-3} + t^{-2} + t^{-1} + t - t^2 - t^3)$.  Both the Henrich and odd writhe polynomials are 0, so give us no information about the forbidden number.  But from Theorem \ref{T:Wforbidden} we get that the forbidden number of $K$ is at least 4.  (It is not hard to see that $K$ can be unknotted using 9 forbidden moves; we do not know whether it can be done with fewer.)

\begin{figure}[htbp]
\begin{center}
\scalebox{.8}{\includegraphics{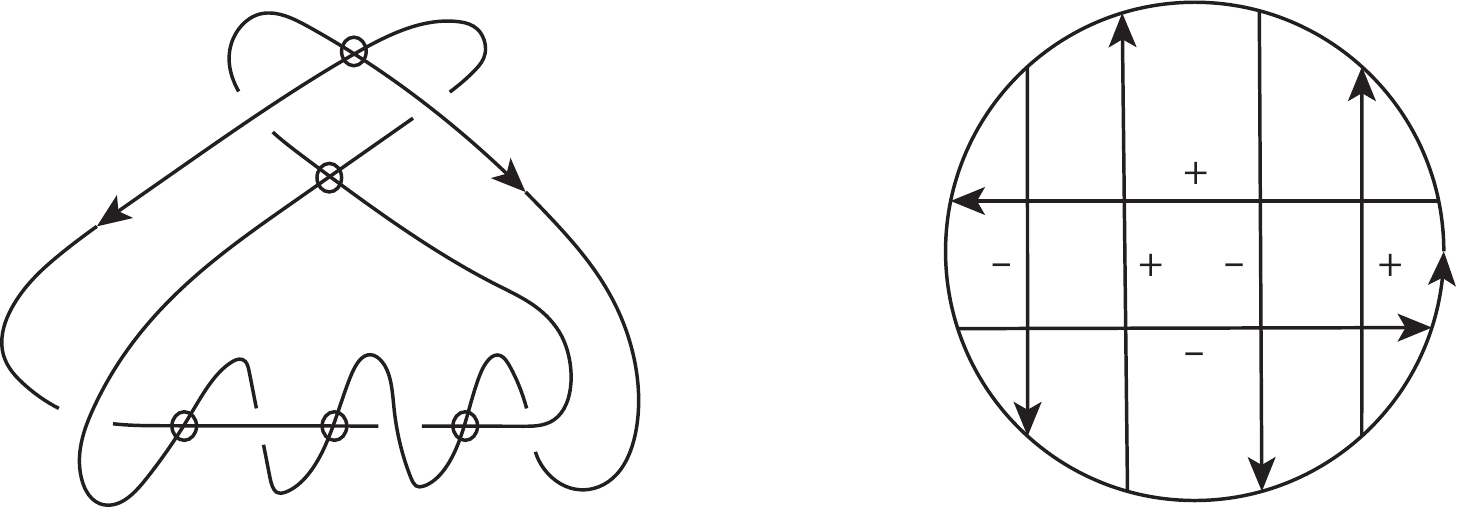}}
\end{center}
\caption{$K =$ O1-U2+O3-U1-O4+U5-O6+U3-O2+U6+O5-U4+.}
\label{F:Fexample}
\end{figure}

The second order writhe polynomial $V_K(t)$ can also give us information about the forbidden number.  Since the pair of chords altered by a forbidden move aren't in an alternating configuration either before or after the move, the only part of $V_K(t)$ changed is the first sum, and the only parts of the sum which change are the terms corresponding to $c_1$ and $c_2$.  So at most four coefficients of the polynomial change, those corresponding to $t^{Ind(c_1)}$, $t^{Ind(c_1)-\e(c_2)}$, $t^{Ind(c_2)}$, and $t^{Ind(c_2)+\e(c_1)}$.  The amount by which the coefficients change is determined by the indices of the crossings, and can be relatively large.  However, we have shown

\begin{prop} \label{P:Vforbidden}
Suppose $K$ is a virtual knot with forbidden number 1.  Then $V_K(t)$ can be written with at most four terms, of which at most two involve even powers of $t$ and at most two involve odd powers of $t$.
\end{prop}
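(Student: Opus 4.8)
The plan is to build $K$ from the unknot by a single forbidden move and track precisely how $V_D$ changes. Since the forbidden number of $K$ is $1$, there is a diagram $D$ of $K$ and one forbidden move, acting on a pair of chords $c_1, c_2$, that transforms $D$ into a diagram $D'$ of the unknot $U$; I would take $V_D(t)$ as my chosen representative of the class $V_K(t)$. The first step is to observe that $V_{D'}(t) = 0$: because the writhe polynomial is a genuine invariant and $U$ is classical, $W_U(t) = 0$, so by Proposition \ref{P:writhereid} the class $V_U(t)$ is represented by the single polynomial $V_{D'}(t)$ (there is nothing to reduce modulo $0$); evaluating on the crossingless diagram of $U$, with zero writhe and empty sums, gives $V_U(t) = 0$, whence $V_{D'}(t) = 0$.

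Next I would invoke the computation recorded just before this proposition. A forbidden move on $c_1, c_2$ leaves the writhe, the two configuration sums over $S_1$ and $S_2$, and every first-sum term except those for $c_1$ and $c_2$ unchanged; consequently $V_D(t) - V_{D'}(t)$ is supported on the four monomials $t^{Ind(c_1)}$, $t^{Ind(c_1)-\e(c_2)}$, $t^{Ind(c_2)}$, and $t^{Ind(c_2)+\e(c_1)}$. Since $V_{D'}(t) = 0$, the representative $V_D(t)$ is itself supported on these four exponents, so $V_K(t)$ can be written with at most four terms.

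For the parity statement, note that the four exponents split into the pairs $\{Ind(c_1),\, Ind(c_1)-\e(c_2)\}$ and $\{Ind(c_2),\, Ind(c_2)+\e(c_1)\}$, and within each pair the two exponents differ by $\e(c_2)$ and $\e(c_1)$ respectively, each an odd number $\pm 1$. Hence each pair contributes exactly one even and one odd exponent, so at most two of the four terms carry even powers of $t$ and at most two carry odd powers, which is the assertion (coincidences among the exponents only decrease these counts). The step I expect to require the most care is the justification that $V_{D'}(t) = 0$: it rests entirely on combining the invariance modulo $W_K$ from Proposition \ref{P:writhereid} with the vanishing $W_U = 0$, and on citing the preceding paragraph to be sure the configuration sums and all other first-sum terms are genuinely untouched by the move.
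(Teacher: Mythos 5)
Your proposal is correct and takes essentially the same route as the paper: the paper's own justification consists precisely of the paragraph preceding the proposition (a forbidden move alters only the first-sum terms for $c_1$ and $c_2$, so at most the four coefficients at exponents $Ind(c_1)$, $Ind(c_1)-\e(c_2)$, $Ind(c_2)$, $Ind(c_2)+\e(c_1)$ change), combined implicitly with the observation that a single move reaches the unknot, whose second-order polynomial vanishes identically. Your write-up merely makes explicit the two steps the paper leaves tacit --- that $V_{D'}(t)=0$ is forced because $W_U(t)=0$ makes the class of Proposition \ref{P:writhereid} a single polynomial, computable as $0$ on the crossingless diagram, and that within each pair the exponents differ by $\pm 1$ and hence split one-even/one-odd --- so it matches the paper's argument in substance.
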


For example, consider knot 4.2 from Jeremy Green's table of virtual knots \cite{gr}, shown in Figure \ref{F:4.2}.  The writhe polynomial for this knot is trivial, so gives no lower bound for the forbidden number.  The second order writhe polynomial is $V_K(t) = 2 + u^{-2} - 2u^{-1} - 2u + u^2$; since the writhe polynomial is trivial, the second order polynomial is well-defined.  $V_K(t)$ has five terms, so the forbidden number is at least 2.  However, the Gauss code for this knot is O1-O2-U1-U2-O3+O4+U3+U4+, and it is easy to see that it can be unknotted with two forbidden moves (one move to uncross chords 1 and 2, and a second to uncross chords 3 and 4).  So the forbidden number for knot 4.2 is exactly 2.

\begin{figure}[htbp]
\begin{center}
\scalebox{2}{\includegraphics{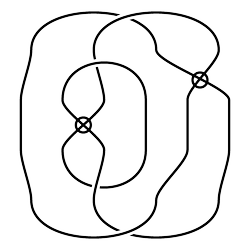}}
\end{center}
\caption{Virtual knot 4.2 (from Green's table of virtual knots \cite{gr}).}
\label{F:4.2}
\end{figure}

\subsection{Positive reflection mutations} \label{SS:mutation}

Given a virtual knot $K$, a {\em Conway mutation} of $K$ is the process of cutting out a tangle (cutting the knot $K$ in four places), transforming the tangle by a horizontal flip, vertical flip, or $180^\circ$ rotation, and gluing it back in.  The rotation is {\em positive} if the orientation of the strands of the tangle are the same before and after the mutation.  A {\em positive rotation} is a positive mutation which rotates the tangle, while a {\em positive reflection} flips the tangle.  Folwaczny and Kauffman \cite{fk} showed that the writhe polynomial could distinguish some pairs of {\em positive rotation} mutants, but could {\em not} distinguish {\em positive reflection} mutants.  We will show that the second order writhe polynomial {\em can} sometimes distinguish positive reflection mutants.  In fact, we will give an infinite family of pairs $(K, MK)$ of a virtual knot and its positive reflection mutant which are distinguished by the second order writhe polynomial.

\begin{figure}[htbp]
\begin{center}
\scalebox{.8}{\includegraphics{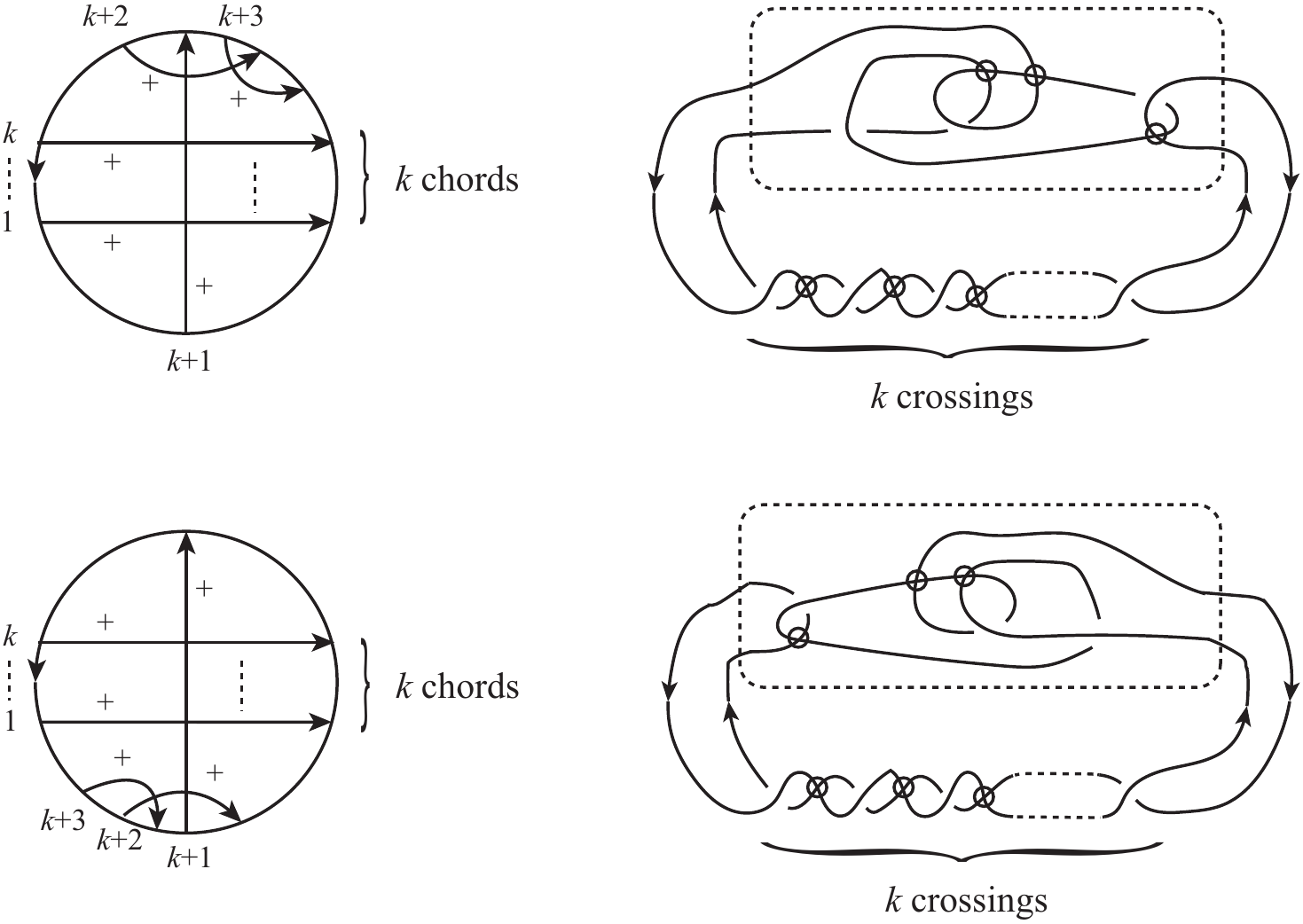}}
\end{center}
\caption{{\bf Top:} Gauss diagram and knot diagram for virtual knot $K$. {\bf Bottom:} Gauss diagram and knot diagram for positive reflection mutant $MK$.  All crossings are positive in both knots.}
\label{F:mutant}
\end{figure}

Consider the virtual knot $K$ shown on the top in Figure \ref{F:mutant} and its positive reflection mutant $MK$ shown on the bottom of Figure \ref{F:mutant}.  The table below shows the index and left over-index for each crossing.

\medskip
\begin{center}
\begin{tabular}{|c|c|c|c|c|}
\hline
& \multicolumn{2}{c |}{$K$} & \multicolumn{2}{c |}{$MK$} \\ \hline
crossing & $LO$ & $Index$ & $LO$ & $Index$ \\ \hline
$1 \leq i \leq k$ & $i - k - 2$ & $-1$ & $i - k$ & $-1$ \\ \hline
$k+1$ & $-k-1$ & $k+1$ & $-k-2$ & $k+1$ \\ \hline
$k+2$ & $-1$ & $0$ & $-k-1$ & $0$ \\ \hline
$k+3$ & $0$ & $-1$ & $-k-1$ & $-1$ \\ \hline
\end{tabular}
\end{center}
\medskip

\noindent Since the indices of the crossings are all the same, the knots have the same writhe polynomial (as expected from \cite{fk}).  The writhe polynomial is $W_K(t) = W_{MK}(t) = -k-2 + kt^{-1} + t^{k+1}$.  However, the second order writhe polynomials are not the same.  Note that, in both $K$ and $MK$, the only pair of alternating chords is $k+1$ and $k+3$.  In $K$ these chords are in configuration $B$, while in $MK$ they are in configuration $A$.
\begin{align*}
V_K(t) &= \dfrac{(k+3)(k+4)}{2} + \left(\sum_c{t^{Ind(c)}(LO(c) - 1)}\right) - t^k \\
& = \dfrac{(k+3)(k+4)}{2} + \left(\sum_{i=1}^k{i-k-3}\right) t^{-1} + (-k-2)t^{k+1} - 2 - t^{-1} - t^k \\
& = \dfrac{(k+3)(k+4)}{2}  - \left(\dfrac{(k+2)(k+3)}{2} - 3\right) t^{-1} - (k+2)t^{k+1} - 2 - t^{-1} - t^k \\
& = \dfrac{k^2+7k+8}{2} - \left(\dfrac{k^2+5k+2}{2}\right) t^{-1} - (k+2)t^{k+1} - t^k
\end{align*}
By a similar computation, we find 
$$V_{MK}(t) = \dfrac{k^2+5k+8}{2} - \left(\dfrac{k^2-k-4}{2}\right) t^{-1} - (k+3)t^{k+1} + t^k$$
The difference is
$$V_K(t) - V_{MK}(t) = k - 3(k+1)t^{-1} + t^{k+1} - 2t^k$$

This is not a multiple of $W_K(t)$, since the writhe polynomial for the two knots does not have a $t^k$ term.  Hence the second order writhe polynomials are not congruent modulo $W_K(t)$.  So the second order writhe polynomial distinguishes $K$ from $MK$.

\newpage
\section{Appendix: Table of writhe polynomials}

These tables list the writhe polynomials $W_K(t)$ and second order writhe polynomials $V_K(t)$ (modulo $W_K(t)$) for all virtual knots with at most four crossings.  The knots were taken from Green's table of virtual knots \cite{gr}.

\begin{center}
\begin{tabular}{| r | r | r |}
\hline
{\bf Knot $K$} & {\bf $W_K(t)$} & {\bf $V_K(t)$} \\ \hline
0.1 & $0$ & $0$ \\ \hline
2.1 & $-2 + t^{-1} + t$ & $t-1$ \\ \hline
3.1 & $1 - t^{-2} + t^{-1} - t$ & $2-2t$\\ \hline
3.2 & $2 - t^{-1} - t$ & $1-t$ \\ \hline
3.3 & $3 - t^{-2} - 2t$ & $3-3t$ \\ \hline
3.4 & $-1 - t^{-2} + 2t^{-1}$& $t^{-1}-1$\\ \hline
3.5 & $2 - t^{-2} - t^2$ & $2-2t^2$\\ \hline
3.6 & $0$ & $0$ \\ \hline
3.7 & $2 - t^{-2} - t^2$ & $2-2t^2$\\ \hline
4.1 & $4 - 2t^{-1} - 2t$ & $- t^{-2} + 2t^{-1} - t^2$ \\ \hline
4.2 & $0$ & $2 + t^{-2} - 2t^{-1} - 2t + t^2$ \\ \hline
4.3 & $-4 + 2t^{-1} + 2t$ & $3 + t^{-2} - 4t^{-1}$ \\ \hline
4.4 & $2 - t^{-1} - t$ & $t - t^2$ \\ \hline
4.5 & $2 - t^{-1} - t$ & $t^{-2} - t^{-1}$ \\ \hline
4.6 & $0$ & $-1 + 2t - t^2$ \\ \hline
4.7 & $4 - 2t^{-1} - 2t$ & $2 - 2t$ \\ \hline
4.8 & $0$ & $0$ \\ \hline
4.9 & $-2 + t^{-1} + t$ & $1 - t^{-1}$ \\ \hline
4.10 & $1 - t^{-2} + t^{-1} - t$ & $2-2t$ \\ \hline
4.11 & $3 - t^{-2} - 2t$ & $4 - t^{-1} - 2t - t^2$ \\ \hline
4.12 & $0$ & $t^{-2} - 2t^{-1} + 2t - t^2$ \\ \hline
4.13 & $0$ & $-1 - t^{-2} + 2t^{-1}$\\ \hline
4.14 & $-t^{-2} + t^{-1} + t - t^2$ & $1 + t - 2t^2$ \\ \hline
4.15 & $3 - t^{-2} - 2t$ & $5 - t^{-1} - 4t$ \\ \hline
4.16 & $0$ & $0$ \\ \hline
4.17 & $1 - t^{-2} + t^{-1} - t$ & $t^{-1} - t$ \\ \hline
4.18 & $-2 + t^{-1} + t$ & $t-1$ \\ \hline
4.19 & $1 - t^{-1} + t - t^2$ & $1 - t^2$ \\ \hline
4.20 & $1 + t^{-2} - 2t^{-1}$& $1 - t^{-1}$\\ \hline
4.21 & $2 - t^{-2} - t^2$ & $1 + t^{-1} - t - t^2$ \\ \hline
4.22 & $1 - t^{-3} + t^{-2} - t$ & $1 + t^{-1} - t$ \\ \hline
4.23 & $-1 + t^{-1} - t + t^2$ & $t^2 - 1$ \\ \hline
4.24 & $1 - t^{-3} + t^{-1} - t^2$ & $2 + t^{-1} - t - 2t^2$ \\ \hline
4.25 & $-4 + 2t^{-1} + 2t$ & $t-t^{-1}$ \\ \hline
4.26 & $t^{-3} - 2t^{-1} + t$ & $-2  - t^{-1} + 3t$\\ \hline
4.27 & $2 - t^{-1} - t$ & $1 - t$ \\ \hline
4.28 & $-2 + t^{-3} - t^{-1} + 2t$ & $-1 - 2t^{-1} + 3t$ \\ \hline
4.29 & $-3 + t^{-2} + 2t$ & $-4 + t^{-1} + 2t + t^2$ \\ \hline
4.30 & $2 - t^{-1} - t$ & $-2 - t^{-2} + 3t^{-1}$ \\ \hline
\end{tabular}
\end{center}

\begin{center}
\begin{tabular}{| r | r | r |}
\hline
{\bf Knot $K$} & {\bf $W_K(t)$} & {\bf $V_K(t)$} \\ \hline
4.31 & $0$ & $1 - 2t + t^2$ \\ \hline
4.32 & $-1 + t^{-2} - t^{-1} + t$ & $-2 + 2t$ \\ \hline
4.33 & $2 - t^{-1} - t$ & $-1 + t^{-1}$ \\ \hline
4.34 & $-1 - t^{-2} + 2t^{-1}$& $-1 + t^{-1}$\\ \hline
4.35 & $-1 + t^{-1} - t + t^2$ & $-1 + t^2$ \\ \hline
4.36 & $-2 + t^{-2} + t^2$ & $1 - t^{-2} - t^{-1} + t$ \\ \hline
4.37 & $-4 + t^{-2} + t^{-1} + t + t^2$ & $-6 + 2t^{-1} + t + 3t^2$ \\ \hline
4.38 & $1 - 2t + t^2$ & $-1 + t$ \\ \hline
4.39 & $1 - t^{-3} + t^{-2} - t$ & $1 + t^{-1} - 2t$ \\ \hline
4.40 & $-2 + t^{-1} + t$ & $-1 + t$ \\ \hline
4.41 & $0$ & $0$ \\ \hline
4.42 & $-1 - t^{-3} + t^{-2} + t^{-1}$ & $-1 + t^{-2}$\\ \hline
4.43 & $4 - 2t^{-1} - 2t$ & $2 - 2t$ \\ \hline
4.44 & $-2 + t^{-1} + t$ & $-1 + t$ \\ \hline
4.45 & $-2 + t^{-3} - t^{-1} + 2t$ & $-3 - t^{-1} + 4t$ \\ \hline
4.46 & $0$ & $2 - t^{-1} - t$ \\ \hline
4.47 & $t^{-3} - 2t^{-1} + t$ & $-2t^{-1}+2t$ \\ \hline
4.48 & $4 - t^{-2} - t^{-1} - t - t^2$ & $1+t^{-1}-2t^2$ \\ \hline
4.49 & $-1 + 2t - t^2$ & $1-t$ \\ \hline
4.50 & $-1 + t^{-2} - t^{-1} + t$ & $-2+2t$ \\ \hline
4.51 & $0$ & $1 - 2t + t^2$ \\ \hline
4.52 & $2 - t^{-1} - t$ & $-1+t^{-1}$ \\ \hline
4.53 & $-4 + 2t^{-1} + 2t$ & $2-2t^{-1}$ \\ \hline
4.54 & $2 - t^{-1} - t$ & $t-t^2$ \\ \hline
4.55 & $0$ & $0$ \\ \hline
4.56 & $0$ & $0$ \\ \hline
4.57 & $1 - t^{-2} + t^{-1} - t$ & $t^{-1}-t$ \\ \hline
4.58 & $0$ & $0$ \\ \hline
4.59 & $0$ & $0$ \\ \hline
4.60 & $-2 + t^{-1} + t$ & $-1+t$ \\ \hline
4.61 & $-2 + t^{-1} + t$ & $-1+t$ \\ \hline
4.62 & $3 - t^{-3} - t - t^2$ & $-2+t-t^2$ \\ \hline
4.63 & $3 - t^{-2} - 2t$ & $-1+2t^{-2}-t^{-1}$ \\ \hline
4.64 & $-t^{-2} + t^{-1} + t - t^2$ & $3t-3t^2$\\ \hline
4.65 & $2 - t^{-2} - t^2$ & $1+t^{-1}-t-t^2$ \\ \hline
4.66 & $1 - t^{-3} + t^{-1} - t^2$ & $2+t^{-1}-t-2t^2$ \\ \hline
4.67 & $1 - t^{-2} + t^{-1} - t$ & $t^{-1}-t$ \\ \hline
4.68 & $0$ & $0$ \\ \hline
4.69 & $2 - t^{-1} - t$ & $1-t$ \\ \hline
\end{tabular}
\end{center}

\begin{center}
\begin{tabular}{| r | r | r |}
\hline
{\bf Knot $K$} & {\bf $W_K(t)$} & {\bf $V_K(t)$} \\ \hline
4.70 & $-1 + t^{-2} - t^{-1} + t$ & $-2+2t$ \\ \hline
4.71 & $0$ & $0$ \\ \hline
4.72 & $0$ & $0$ \\ \hline
4.73 & $-4 + 2t^{-1} + 2t$ & $2-t^{-2}-2t+t^2$ \\ \hline
4.74 & $2 - t^{-1} - t$ & $2-3t+t^2$ \\ \hline
4.75 & $0$ & $0$ \\ \hline
4.76 & $0$ & $0$ \\ \hline
4.77 & $0$ & $0$ \\ \hline
4.78 & $3 - t^{-3} - t - t^2$ & $4-t-3t^2$ \\ \hline
4.79 & $-1 - t^{-3} + t^{-2} + t^{-1}$ & $-1+t^{-2}$\\ \hline
4.80 & $-4 + t^{-3} + 3t$ & $-6+6t$ \\ \hline
4.81 & $2 + t^{-3} - 3t^{-1}$& $3-3t^{-1}$\\ \hline
4.82 & $-4 + t^{-2} + t^{-1} + t + t^2$ & $3-2t^{-2}-t^{-1}$ \\ \hline
4.83 & $2 - t^{-3} + t^{-2} - t^{-1} - t^2$ & $4-t^{-1}-3t^2$\\ \hline
4.84 & $t^{-2} - t^{-1} - t + t^2$ & $1-2t^{-2}+t^{-1}$ \\ \hline
4.85 & $-2 + t^{-2} + t^2$ & $2t^2-2$\\ \hline
4.86 & $2 - t^{-2} - t^2$ & $2-2t^2$ \\ \hline
4.87 & $4 - t^{-3} - t^{-1} - 2t^2$ & $6-t^{-1}-5t^2$ \\ \hline
4.88 & $t^{-3} - 2t^{-2} + t^{-1}$ & $-t^{-2}+t^{-1}$\\ \hline
4.89 & $-4 + 2t^{-2} + 2t^2$ & $2-3t^{-2}+t^2$ \\ \hline
4.90 & $0$ & $0$ \\ \hline
4.91 & $4 - t^{-3} - t^{-1} - t - t^3$ & $6-t^{-1}-2t-3t^3$ \\ \hline
4.92 & $-2 + t^{-3} + t^3$ & $-3+3t^3$ \\ \hline
4.93 & $2 + t^{-3} - 2t^{-2} - t$ & $3-t^{-2}-2t$ \\ \hline
4.94 & $2 - t^{-1} - t$ & $1-t$ \\ \hline
4.95 & $-2 + t^{-3} + t^3$ & $-3+3t^3$ \\ \hline
4.96 & $-2 + t^{-2} + t^2$ & $-2+2t^2$ \\ \hline
4.97 & $t^{-2} - t^{-1} - t^2 + t^3$ & $1 - 3t^{-2} + 2t^{-1}$\\ \hline
4.98 & $0$ & $0$ \\ \hline
4.99 & $0$ & $0$ \\ \hline
4.100 & $-4 + 2t^{-1} + 2t$ & $2-2t^{-1}$ \\ \hline
4.101 & $2 - t^{-3} - t^3$ & $3-3t^3$ \\ \hline
4.102 & $-t^{-3} + t^{-1} + t - t^3$ & $t^{-1}+2t-3t^2$ \\ \hline
4.103 & $-2 + 2t^{-2} - t^{-1} + t^3$ & $3 - 5t^{-2} + 2t^{-1}$\\ \hline
4.104 & $-2 + t^{-3} + t^3$ & $-3+3t^3$ \\ \hline
4.105 & $0$ & $0$ \\ \hline
4.106 & $2 - t^{-2} - t^2$ & $2-2t^2$ \\ \hline
4.107 & $0$ & $-2 + t^{-2} + t^2$ \\ \hline
4.108 & $0$ & $0$ \\ \hline
\end{tabular}
\end{center}

\newpage



\end{document}